\newtheorem{theorem}{Theorem}
\newtheorem{proposition}[theorem]{Proposition}
\newtheorem{lemma}[theorem]{Lemma}
\newtheorem{corollary}[theorem]{Corollary}
\newtheorem{remark}[theorem]{Remark}
\renewcommand*{\P}{\mathbb{P}}
\newcommand*{\esp}{\mathbb{E}}
\DeclareMathOperator{\Var}{Var}
\newcommand*{\ensembledenombres}{\mathbb}
\newcommand*{\R}{\ensembledenombres{R}}
\newcommand*{\N}{\ensembledenombres{N}}
\newcommand*{\RR}{\ensembledenombres{R}}
\title{On the Wasserstein distance between a hyperuniform point process and its mean}
\author{Raphael Butez, Sandrine Dallaporta, David Garc\'ia-Zelada}
\begin{document}
\maketitle

\begin{abstract}
We study the existence of bounds on the expected $p$-Wasserstein distance between a random measure and its mean under the assumption that the $p$-th centered moments of the counting statistics are controlled uniformly in space. The average Wasserstein transport cost is shown to be bounded from above and from below by some multiples of the number of points. $D$-dimensional versions of those results are also obtained. As a corollary, we prove that for any value of $p\geq 1$ the Ginibre point process can be seen as a perturbed lattice with identically distributed perturbations with a finite $p$-th moment.
\end{abstract}

\section{Introduction}
Evaluating the transport cost between a point process and its mean, or between two independent copies of the same point process, which are two very closely related questions, have a long and rich history. The notion of transport
cost involves the Wasserstein distance 
(Kantorovich-Wasserstein distance) $W_p$
whose definition is recalled in the next section.
One of the seminal papers on this question is the work of Ajtai,
Koml\'os and Tusn\'ady \cite{AKT84} in the unit square whose
generalization
for $d$-dimensional cubes
says that, for a positive integer $d$ and
for a fixed $p\geq 1$,  
there exists a constant $C_{d,p} > 0$ such that,
if $X_1,\dots,X_N$ is a sample of independent 
uniformly distributed random variables on $[0,1]^d$
and if $\mu_N = \frac{1}{N}\sum_{i=1}^N \delta_{X_i}$ 
is its empirical measure and $\mu$ is the uniform
measure on $[0,1]^d$, then
\begin{equation} \label{eq:iid-macro}
	\esp[W_p^p(\mu_N,\mu)]^{1/p} \leq 
	C_{d,p}
	\begin{cases} 
		\displaystyle N^{-1/2} &\text{ if } d = 1 \\ 
		\displaystyle 
		N^{-1/2} \sqrt{\log N}    &\text{ if } 
		d = 2\\ 
		\displaystyle 
		N^{-1/d}   &\text{ if } d \geq 3 \end{cases}
\end{equation}
and the order is optimal, i.e., there exists a matching lower bound.
This statement can be found in the work of Ledoux \cite[Equation (7)]{L19}. The same holds for uniformly distributed independent samples
on compact Riemannian manifolds \cite[Section 4]{L19}.
Generalizations to other i.i.d.\ 
samples and cost functions include 
\cite{Talagrand, HighDimensions,PolishSpaces, fournierguillin, L19}. 
The first terms of an asymptotic expansion have been obtained in 
\cite{ambrosiostatrevisan,goldmantrevisan,ambrosiogoldmantrevisan,
	goldmanhuesmannotto}.

Let us focus on the two-dimensional case. It may be shown that an empirical measure $\mu_N$ will always be at $p$-Wasserstein distance at least $O(N^{-1/2})$ from the Lebesgue measure (see Lemma \ref{lem:lowerbound} and the appendix for more details). By decomposing the unit square as a square grid (with small squares of area $N^{-1}$) and considering
a point at each vertex, 
we obtain this optimal distance to the uniform measure: $O(N^{-1/2})$.  It is quite natural to consider this square grid decomposition as a typical ``uniform'' set of points. From this point of view,
the above paragraph tells us that
uniformly distributed i.i.d.\ points are not
so ``uniform'' after all, in the sense that the average $p$-Wasserstein distance between their empirical measure and the uniform measure is larger than what is expected for a ``uniform'' grid.
%The question arises of why is that so.
It seems therefore that some interaction between
the points is needed for them to be closer 
to this ``uniform'' grid.

Can we find conditions on two-dimensional point processes for which the Wasserstein distance is of the optimal order? 
When the points are not independent and identically distributed, only few results 
about convergence rates are available. To our knowledge, the first result available is from 
Meckes and Meckes \cite{meckesmeckes} who obtained bounds on the expected $W_p$ distance for the empirical measure of the Ginibre ensemble. 
In the case of two-dimensional Coulomb gases, 
the proof of the concentration inequalities from Chafaï, Hardy and Maïda \cite{CHM18} combined with
the asymptotic expansion of the partition function stated
in the work of Sandier and Serfaty
\cite{SandierSerfaty} give a bound
\begin{equation} 
	\label{eq:optimalrate}
	\esp[W_1(\mu_N,\mu)]  \leq CN^{-1/2} 
\end{equation}
where $\mu_N$ is the empirical measure of the Coulomb gas and $\mu$ its equilibrium measure. The same bound was obtained
by Carroll, Marzo, Massaneda and Ortega-Cerdà in
\cite{carrollmarzomassanedaortegacerda} for a class of $\beta$-ensembles on compact manifolds. In his PhD thesis, Prod'homme \cite{prodhomme} obtained a similar result for the $W_2$ distance for the Ginibre ensemble.
Furthermore, Jalowy \cite{jalowy2023wasserstein} gave 
general bounds for the $W_p$ distance with a family of random matrices
improving the bounds given by
O'Rourke and Williams
\cite{RandomMatrices}.

In this paper, we provide a condition on the centered $p$-th moment of the number of points in squares so that the optimal rate 
 is attained for $W_p$:
 \[\esp[W_p(\mu_N,\overline{\mu}_N)^p]^{1/p} \leqslant C_pN^{-1/2},\]
 where $\overline{\mu}_N$ is $\esp\mu_N$ up to some normalization factor, in order to deal with the fact that the number of points may be random. Similar results in dimension $d$ are also provided. This condition on moments may be linked to the notion of hyperuniformity, which is a way for a point process to encode a more regular pattern in the repartition of points than in the independent case. The proof strategy is very close to the approach of Prod'homme for the Ginibre ensemble \cite{prodhomme}. It relies on a multi-scale argument which was already used on this problem by several authors, including \cite{AKT84}.

Recently, Lachièze-Rey and Yogeshwaran \cite{lachiezereyyogeshwaran} also studied the transportation problem for hyperuniform point processes. They obtain optimal transport cost for $p$-Wasserstein distances for translation invariant point processes with some conditions on the pair correlation function and a control of the integrable reduced pair correlation function. 
This project was carried out independently 
of ours and, despite the overlap of the results, uses totally different techniques. In addition, Leblé and Huesmann obtained in \cite{huesmannleble} the almost equivalence between hyperuniformity and the existence of Wasserstein bounds in dimension $2$, at the cost of a much more technical approach. The links between these works are discussed more precisely in Section \ref{sec:Comments}.

The paper is organized as follows. Section \ref{sec:Background} provides the relevant background about point processes, Wasserstein distances and hyperuniformity. Section \ref{sec:Results} first states a result on random measures (Theorem \ref{thm:minimal-hypotheses}) and then provides a version for point processes (see Corollary \ref{cor:main}), particularly determinantal ones (see Proposition \ref{prop:DPP}). Section \ref{sec:App} is devoted to applying the preceding results to the Ginibre ensemble and the Poisson point process, showing that they may be seen as perturbed lattices. After several comments and perspectives (Section \ref{sec:Comments}), the proofs are presented in Sections \ref{sec:Proofs} and \ref{sec:ProofGinibre}.

\section{Background}\label{sec:Background}

\subsection{Space of measures and point processes}

The information of a configuration of points on $\mathbb R^d$ will be captured
by its counting measure. More precisely, the information of
a locally finite multiset $X \subset \mathbb R^d$
is contained in
the locally finite measure $\mu=\sum_{x \in X} \delta_x$
and the observables
of interest are the number of points
in sets $B$ which can be nicely described as $\mu(B)$.
This motivates the following definition.

Consider the set $\mathcal M$ of locally finite measures
on $\mathbb R^d$ 
and define, for every bounded measurable set 
$B \subset \mathbb R^d$,
the function $\pi_B: \mathcal M \to [0,\infty)$
by $\pi_B(\mu) = \mu(B)$. 
We endow the set $\mathcal M$ with the $\sigma$-algebra
generated by
the functions $\pi_B$ for every bounded measurable $B$.
Moreover, the set of point configurations $\mathcal C$ is
defined as the measurable set
\[\mathcal C = \{X \in \mathcal M: X(B) \in \mathbb N 
\mbox{ for every bounded measurable } B \}.\]
In this way, the map that
associates a locally finite multiset $\tilde X \subset \mathbb R^d$
to the point configuration 
$X=\sum_{x \in \tilde X} \delta_x \in \mathcal C$
is a bijection.
For a measurable subset $A \subset \mathbb R^d$,
the set of locally finite measures
$\mathcal M_A$ on $A$ can be seen as a measurable subset
of $\mathcal M$ while the set of 
point configurations $\mathcal C_A$ on $A$ can be seen
as a measurable subset of $\mathcal C$.
A random measure on $A$ is
a random element of $\mathcal M_A$ and a point process
on $A$
is a random element of $\mathcal C_A$.
If $\mu$ is a random measure on $A$, we can define its expected
value $\mathbb E[\mu]$ as the measure
on $A$ that satisfies
\[\esp[\mu](B) = \esp[\mu(B)] \mbox{ for every measurable } B
\subset A.\]
A particular kind of point processes
used in Proposition \ref{prop:DPP}
is the Hermitian determinantal point processes. 
We define here the particular case of such processes on $\mathbb R^d$
with respect to Lebesgue measure.
Let $K:\mathbb R^d \times \mathbb R^d \to \mathbb C$ be a measurable function such that $K(x,y) = \overline{K(y,x)}$
for every $x,y \in \mathbb R^d$.
We say that the point process $X$ on $\mathbb R^d$ 
is determinantal with kernel
$K$ if $X(\{x\}) \leq 1$ for all $x \in \mathbb R^d$ and
for every $B_1,\dots, B_k \subset \mathbb R^d$ 
pairwise disjoint measurable subsets
\[\mathbb E[X(B_1)\dots X(B_k)] 
= \int_{B_1 \times \dots \times B_k} 
\det (K(x_i,x_j)_{1 \leq i,j \leq k}) 
\mathrm d x_1 \dots \mathrm d x_k.\]

\subsection{Wasserstein distance} \label{sub:Wasserstein}
For $p \geq 1$ we define 
the Wasserstein distance (or Kantorovich distance 
as remarked, for instance, in \cite{Cha16} or as
explained in \cite[Page 8]{Vershik}) between 
two finite positive measures $\mu$ and $\nu$
on $\mathbb R^d$
of the same mass by
\[W_p(\mu,\nu) 
= \bigg(\inf_{\Pi} \iint_{\mathbb R^d \times \mathbb R^d}
\|x-y\|^p \mathrm d \Pi(x,y) \bigg)^{1/p}
\in [0,\infty],\]
where the infimum is taken over
all positive measures $\Pi$
on $\mathbb R^d \times \mathbb R^d$
with first marginal $\mu$ and second
marginal $\nu$ 
(couplings of $\mu$ and $\nu$) or, more precisely,
such that $\Pi(A \times \mathbb R^d)
=\mu(A)$ and $\Pi(\mathbb R^d \times A) = \nu(A)$
for every measurable subset
$A \subset \mathbb R^d$. Since
those $\Pi$ must have the same mass as $\mu$ and $\nu$,
the requirement of $\mu$ and $\nu$ 
having the same mass is essential for
this definition to make sense.
In this way, $W_p$ defines
a distance on the set of finite positive
measures of a fixed mass.
Moreover, we may notice that $W_p$ is compatible with
the measurable structure on 
the measurable set of finite measures $\mathcal M_f \subset \mathcal M$,
i.e.,
$W_p$ is a measurable
function on the measurable set
$\{(\mu,\nu) \in \mathcal M_f \times \mathcal M_f:
\mu(\mathbb R^d) = \nu(\mathbb R^d) \}$.

Our main goal in this work is to use some
hyperuniformity conditions to obtain
optimal rates for the expected value
of the Wasserstein distance.
Since the notion of hyperuniformity (defined below)
typically applies to point processes and not to 
sequences of empirical measures, 
we would like to translate
the result stated in the introduction
to a different scale. 
To achieve this, we first notice the following.
Consider two measures $\mu$ and $\nu$ on $\mathbb R^d$
with the same mass and two positive constants 
$\lambda, N >0$. Define $f:\mathbb R^d \to \mathbb R^d$
by $f(x) = \lambda x$. Then, by using that
$\Pi$ is a coupling of $\mu$ and $\nu$ 
if and only if $N (f \times f)_* \Pi$ is a coupling of
$N f_* \mu$ and $N f_* \nu$, we get
\[W_p^p(N f_* \mu, N f_* \nu) = N \lambda^p W_p^p(\mu,\nu).\]
With this in mind, Equation \eqref{eq:iid-macro}
would read as follows. 
For a positive integer $d$ and
for a fixed real number $p\geq 1$,  
there is a constant $C_{d,p} > 0$ such that,
if $X_1, \dots, X_N$ is an independent 
sequence of
uniformly distributed random variables on $[0,N^{1/d}]^d$,
and if we let $\mu_N = \sum_{i=1}^N \delta_{X_i}$
and consider the Lebesgue measure $\mu$ on $[0,N^{1/d}]^d$,
\begin{equation} \label{eq:iid-micro}
	\esp[W_p^p(\mu_N,\mu)] \leq 
	C_{d,p}
	\begin{cases} 
		\displaystyle N^{1+p/2} &\text{ if } d = 1 \\ 
		\displaystyle 
		N (\log N)^{p/2}    &\text{ if } 
		d = 2\\ 
		\displaystyle 
		N   &\text{ if } d \geq 3 \end{cases}.
\end{equation}
This setting,
where the important object is a point process 
(a ``counting'' measure),
can be seen as the \textit{microscopic} setting
while the rescaled version considered in the introduction,
where the empirical measure is the main object, can be thought
of as the \textit{macroscopic} setting.
We have decided
to write our main result Theorem \ref{thm:minimal-hypotheses} 
in the microscopic setting in which we see a
linear upper bound
$\esp[W_p^p(\mu_N,\mu)] \leq C N$.

A standard argument gives a lower bound for the distance between a uniform measure on a finite set and an absolutely continuous measure with respect to Lebesgue measure; see the following lemma. For the sake of completeness, we include the proof
in the appendix.
\begin{lemma}[Deterministic lower bound]
	\label{lem:lowerbound}
	There exists a constant $\alpha_d>0$ such that,
	for any integer $N \geq 1$, any
	$x_1,\dots, x_N \in \mathbb R^d$ and any
	measure $\mu$ on $\mathbb R^d$ of mass $N$ with 
	density with respect to Lebesgue measure
	bounded from above by a constant $A>0$, one has,
	for every $p \geq 1$,
	\[ W_p^p\Big(\sum_{i=1}^N \delta_{x_i}, \mu\Big) \geq (\alpha_d)^p 
	\frac{N}{A^{p/d}}  .\]
	
\end{lemma}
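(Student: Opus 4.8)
The plan is to bound from below the transport cost of an \emph{arbitrary} coupling $\Pi$ of $\sum_{i=1}^n \delta_{x_i}$ and $\mu$, and then take the infimum over couplings. The guiding intuition is purely local: the density bound forces the mass of $\mu$ lying within a small radius $r$ of the points $x_1,\dots,x_n$ to be small, so a definite fraction of the total mass $n$ must be transported over a distance at least $r$.

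First I would record the elementary pointwise inequality $\|x-y\|^p \geq r^p \indic_{\{\|x-y\|>r\}}$ for any fixed $r>0$, which upon integration against $\Pi$ gives
\[
\iint_{\mathbb R^d \times \mathbb R^d} \|x-y\|^p \, \mathrm d\Pi(x,y) \;\geq\; r^p\, \Pi\big(\{(x,y): \|x-y\|>r\}\big).
\]
Thus everything reduces to showing that a large portion of $\Pi$ lives on long-range pairs. Next I would estimate the complementary short-range mass. Since the first marginal of $\Pi$ is $\sum_{i}\delta_{x_i}$, we have $x\in\{x_1,\dots,x_n\}$ for $\Pi$-almost every $(x,y)$; hence $\|x-y\|\le r$ forces $y\in\bigcup_{i=1}^n \bar B(x_i,r)$. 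Projecting onto the second coordinate and using the density bound on each ball gives
\[
\Pi\big(\{\|x-y\|\le r\}\big) \;\le\; \mu\Big(\bigcup_{i=1}^n \bar B(x_i,r)\Big) \;\le\; \sum_{i=1}^n \mu\big(\bar B(x_i,r)\big) \;\le\; n\, A\, \omega_d\, r^d,
\]
where $\omega_d$ denotes the volume of the unit ball in $\mathbb R^d$. Since $\Pi$ has total mass $n$, the long-range mass is at least $n(1-A\omega_d r^d)$.

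Finally I would optimize over $r$ by choosing $r=(2A\omega_d)^{-1/d}$, so that the short-range mass is at most $n/2$ and the long-range mass is at least $n/2$. Combining the two displays yields
\[
W_p^p\Big(\sum_{i=1}^n \delta_{x_i},\mu\Big) \;\geq\; \frac{n}{2}\,r^p \;=\; \frac{n}{2\,(2\omega_d)^{p/d}}\, A^{-p/d}.
\]
It then remains to produce a constant $\alpha_d$ depending on $d$ alone: setting $\alpha_d = \tfrac{1}{2}(2\omega_d)^{-1/d}$ gives $(\alpha_d)^p = 2^{-p}(2\omega_d)^{-p/d} \le \tfrac12 (2\omega_d)^{-p/d}$ for all $p\ge 1$, which is exactly the claimed bound.

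The argument is short, and the only point requiring care—the mild obstacle here—is precisely this last step. The naive optimization produces the prefactor $\tfrac12(2\omega_d)^{-p/d}$, whose $p$-th root still carries a $p$-dependent factor $2^{-1/p}$; one has to absorb this into a $p$-free constant by exploiting $p\ge 1$ (equivalently $2^{-p}\le 2^{-1}$), as above. The remaining verifications—that the first marginal is $\Pi$-a.e.\ supported on $\{x_1,\dots,x_n\}$, and that using closed balls is harmless since $\mu$ is absolutely continuous—are routine.
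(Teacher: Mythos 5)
Your proof is correct, and it takes a genuinely different route from the paper's. The paper works first with $p=1$ via the Kantorovich--Rubinstein dual formulation: it builds the explicit $1$-Lipschitz test function $f(x)=\max_i (c-|x-x_i|)_+$, computes $\int f\,\mathrm d\mu_n - \int f\,\mathrm d\mu \geq n\big(c - A c^{d+1}|\partial B_1|/(d(d+1))\big)$, optimizes over $c$, and only then passes to general $p$ through H\"older's inequality $W_1 \leq n^{1-1/p}W_p$. You instead argue directly on the primal side, for all $p$ at once: any coupling must move at least half of the total mass $n$ a distance exceeding $r=(2A\omega_d)^{-1/d}$, because the second marginal $\mu$ can place at most $nA\omega_d r^d = n/2$ of its mass in the union of the balls $\bar B(x_i,r)$. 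Your key steps are all sound: the first marginal condition does force $x\in\{x_1,\dots,x_n\}$ for $\Pi$-a.e.\ $(x,y)$, the union bound and density bound give the short-range mass estimate, and your final absorption of the $p$-dependence into $\alpha_d=\tfrac12(2\omega_d)^{-1/d}$ using $2^{-p}\leq 2^{-1}$ for $p\geq1$ is exactly the right way to get a $p$-free constant (the paper's constant $\alpha_d=(d/|\partial B_1|)^{1/d}\,d/(d+1)$ is likewise $p$-free, obtained there because the $c$-optimization happens at the $W_1$ level). What each approach buys: yours is more elementary and self-contained, avoiding both the duality theorem for $W_1$ and the H\"older interpolation step; the paper's dual argument yields a slightly sharper constant from the exact optimization of the tent function, but for the purposes of the lemma the two are equivalent.
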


\subsection{Hyperuniformity}

In this subsection we give the standard notion of hyperuniformity. Although there is such a definition for general dimension, we focus on dimension 2. Indeed, in this case, the main hypothesis in Theorem \ref{thm:minimal-hypotheses} has a clear link with the notion of hyperuniformity, whereas it is not the notion we use for the analogous result in dimension $d$ (Theorem \ref{thm:minimal-hypotheses dimension}). We refer to \cite[Equation (97)]{Torquato}
for hyperuniformity in general dimension (see also \cite{Coste} and \cite{lachiezereySurvey}). 

A hyperuniform point process on $\mathbb R^2$ is usually defined as a point process $X$ 
on $\mathbb R^2$ whose law is invariant under translations and for which
\[ \lim_{R \to +\infty} \frac{\mathrm{Var}(X(D(0,R)))}{|D(0,R)|} = 0 . \]
Here $D(0,R)$ denotes the disk of radius $R$ centered at $0$.
There is a vast literature on hyperuniform point processes and we refer to the surveys of Coste \cite{Coste}, Lachièze-Rey \cite{lachiezereySurvey} and Torquato \cite{Torquato} on this topic. 
The usual motivation for this definition is to think of them as
``more uniform'' or
more ordered than
stationary Poisson point processes for which 
$\mathrm{Var}(X(D(0,R)))$ is proportional to $|D(0,R)|$. The Ginibre point process and the zeros of the flat Gaussian analytic function are two examples of hyperuniform point processes. Intuitively, the points of a hyperuniform point process are well spread out in space, with no cluster or empty spots,
which makes those processes easier to match to a continuous measure and good candidates to match the optimal transport cost.
Usually three classes are defined among hyperuniform point processes according to the decay rate of $\mathrm{Var}(X(D(0,R)))$.
\[ \begin{cases}
	\text{Type \,\,I \hspace{2mm} if: } 
	&\mathrm{Var}(X(D(0,R))) \lesssim |D(0,R)|^{1/2}, \\
	\text{Type \,II \hspace{1mm} if: } &\mathrm{Var}(X(D(0,R))) \lesssim |D(0,R)|^{1/2} \log R, \\
	\text{Type III \ if: } &\mathrm{Var}(X(D(0,R))) \lesssim |D(0,R)|^{1-\varepsilon}.
\end{cases}\]
Here, $\lesssim$ means that the left-hand side is bounded by a constant times the right-hand side.

These categories do not cover all possible hyperuniform point processes, and one can build a point process with more or less any growth of the number variance; see \cite[Theorem 3]{dereudreflimmelhuesmannleble}. Still, those classes give an idea of what can be expected for most point processes.

In this work, we will consider squares instead of disks when taking
the variance. 
There exist point processes that are hyperuniform with disks and not hyperuniform with squares, but as soon as the hyperuniformity can be ensured via a control of the structure factor at the origin, the point process is hyperuniform for any reasonable window shape, see \cite{Coste} and \cite{lachiezereySurvey}. 
On the other hand, our point processes do not need
to be defined on the whole plane but only on squares, so that invariance under translations
is not really a meaningful condition here.

\section{Results} \label{sec:Results}
We begin with stating a result on random measures on dimension $2$ and then we give
its $d$-dimensional version. A corollary for 
a $p$-th moment version of type III hyperuniform
point process will be given later. Finally,
usual type III hyperuniform 
Hermitian determinantal point processes are shown to satisfy our hypotheses
for every finite $p \geqslant 1$.

\begin{theorem}\label{thm:minimal-hypotheses}
	Fix $p \geq 1$ and consider a real number $N\geq 1$. Let $\mu_{N}$ be a random measure on
	the square $Q_N=[0,\sqrt{N}]^2$ such that the following conditions 
	hold.
	\begin{enumerate}
		\item\label{Hypo1:moment_minimal} There exists
		a monotone function
		$g:[1,N]\to (0,\infty)$,
        either non-decreasing or non-increasing, such that for
		any square $B \subset Q_N$ with
		$|B| \geqslant 1$,
		\begin{equation}
			\esp[|\mu_N(B)-\esp[\mu_N(B)]|^p] \leqslant |B|^{p/2}
			g(|B|)^p.
		\end{equation}
		\item\label{Hypo3:density_minimal} There exist two constants 
		$a \in (0,1]$ and $A \in [1,\infty)$
		such that for any
		square $B \subset Q_N$
		\begin{equation} 
			a |B| \leqslant  \esp[\mu_N(B)]
			\leqslant A |B| .
		\end{equation}
		This is equivalent to the requirement that $\esp[\mu_N]$ has a density
		with respect to the Lebesgue measure bounded above
		by $A$ and below by $a$.
	\end{enumerate}
	Let $M_N = \mu_N(Q_N)$ and define the random measure
	$\overline{\mu}_{N} = \frac{M_N}{\esp[M_N]}\mathbb E[\mu_{N}]$
	of total mass $M_N$.
	Then there exists a constant $C_p$ 
	(depending only on $p$) such that
	\[\mathbb{E}[W_p^p(\mu_N, \overline{\mu}_N)]  \leqslant 
	(C_p a^{1-2p} A^p)
	N \Big( 1 + g(1) + g(N) + \int_1^{N} g(y) \frac{\mathrm d y}{y} \Big)^p.\]
\end{theorem}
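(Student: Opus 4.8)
The plan is to build an explicit coupling via a dyadic (quadtree) decomposition of the square $Q_N$, a strategy reminiscent of the classical Ajtai--Koml\'os--Tusn\'ady argument. I would write $Q_N$ as a root of size $\sqrt N$ and repeatedly subdivide each square into four congruent subsquares, obtaining a hierarchy of dyadic squares down to scale $1$ (so that roughly $\log_4 N$ levels appear, and the smallest squares have area of order $1$, which is exactly the regime in which Hypothesis \ref{Hypo1:moment_minimal} applies). The transport cost is then controlled level by level: a standard observation is that if two measures agree on the mass of each square at the finest scale, the cost of transporting within each such square is at most the diameter to the $p$ to the power times the mass, and the cost of fixing the mass discrepancies propagates up the tree. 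Concretely, I would use a telescoping decomposition in which, at each dyadic square $B$ at level $k$, the relevant error is the difference between $\mu_N(B)$ and its ``expected-profile'' counterpart $\overline\mu_N(B)=\frac{M_N}{\esp[M_N]}\esp[\mu_N(B)]$, and these errors are moved between a square and its four children.

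The key quantitative input is a bound, at each scale, on the $p$-th moment of the local mass discrepancy between $\mu_N$ and $\overline\mu_N$. Here one must be a little careful: $\overline\mu_N$ is itself random through the factor $M_N/\esp[M_N]$, so I would first compare $\mu_N(B)$ to $\esp[\mu_N(B)]$ (controlled directly by Hypothesis \ref{Hypo1:moment_minimal}, which gives $\esp[|\mu_N(B)-\esp[\mu_N(B)]|^p]\le |B|^{p/2}g(|B|)^p$) and then account for the global mass fluctuation $M_N-\esp[M_N]$, which is exactly the discrepancy on the top-level square $B=Q_N$ and hence is also controlled by the same hypothesis with $|B|=N$. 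The lower and upper density bounds of Hypothesis \ref{Hypo3:density_minimal} enter to convert expected masses into areas, to ensure $\esp[M_N]\asymp N$, and to keep the renormalizing factor under control; this is where the $a^{1-2p}A^p$ prefactor should emerge.

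Assembling the levels, a square at dyadic level $k$ has side of order $2^{-k}\sqrt N$, contributing a transport-distance factor of order $(2^{-k}\sqrt N)^p$, and there are of order $4^k$ such squares, each carrying a mass-discrepancy whose $p$-th moment is of order $|B|^{p/2}g(|B|)^p=(2^{-k}\sqrt N)^p g(4^{-k}N)^p$. Summing the per-level contributions and taking expectations, the $4^k$ count cancels against one factor, leaving a geometric-looking sum in $k$ weighted by $g(4^{-k}N)^p$; the monotonicity of $g$ is what lets me compare this discrete sum to the integral $\int_1^{\max(1,N)} g(y)\,\frac{\mathrm d y}{y}$, producing the factor $\big(1+\int_1^{\max(1,N)} g(y)\frac{\mathrm d y}{y}\big)^p$ after an application of a power-mean or Minkowski-type inequality to pass the $p$-th power outside the sum over levels. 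The overall factor of $N$ comes from the top-scale side length $(\sqrt N)^p$ balanced against the mass normalization.

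The main obstacle I anticipate is twofold. First, organizing the telescoping coupling cleanly so that the per-level mass transfers are well-defined despite $\mu_N$ and $\overline\mu_N$ having (random) equal total masses but differing local masses — in particular making sure the construction is a genuine coupling and that the cost bookkeeping at each level is rigorous rather than heuristic. Second, and more delicately, handling the $p$-th powers: the naive bound on a sum of per-level costs would lose a factor in the number of levels unless one applies Minkowski's inequality (in $L^p$) or a weighted H\"older argument carefully, and it is precisely this step that must be tuned so that the logarithmically many levels turn into the clean integral $\int_1^{\max(1,N)} g(y)\frac{\mathrm d y}{y}$ rather than an extra $(\log N)$ factor. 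Getting the weights in the Minkowski splitting to match the decay/growth of $g$ dictated by its monotonicity is where I expect most of the real work to lie.
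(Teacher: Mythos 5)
Your overall architecture is the same as the paper's: a dyadic subdivision of $Q_N$ down to scale $1$, an interpolating sequence of measures between $\overline{\mu}_N$ and $\mu_N$ (the paper takes $\nu_k = \sum_{B \in \mathcal B_k} \frac{\mu_N(B)}{\esp[\mu_N(B)]}\esp[\mu_N]|_B$, which in particular makes $\nu_0 = \overline{\mu}_N$ exactly, so the randomness of $M_N/\esp[M_N]$ that worries you is absorbed automatically), Minkowski's inequality in $L^p$ over the levels, monotonicity of $g$ to turn the sum over scales into $\int_1^N g(y)\frac{\mathrm d y}{y}$, and a crude diameter bound at the finest scale. All of that matches.

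The genuine gap is in the per-scale cost estimate for $p>1$. Your accounting is based on ``the cost of transporting within a square is at most $\mathrm{diam}(B)^p$ times the mass moved.'' With that bound, the expected cost of rearranging mass inside a square $B$ at level $k$ is at most $\mathrm{diam}(B)^p\,\esp[\delta_B]$ where $\delta_B$ is the local mass discrepancy, and Hypothesis \ref{Hypo1:moment_minimal} only gives $\esp[\delta_B]\lesssim |B|^{1/2}g(|B|)$. Summing over the $4^k$ squares of level $k$ yields a contribution of order $4^{k(1-p)/2}N^{(p+1)/2}g(4^{-k}N)$, whose sum over $k$ is dominated by the coarsest scale and is of order $N^{(p+1)/2}$ --- superlinear for every $p>1$ (your own bookkeeping, which pairs $\mathrm{diam}(B)^p$ with the $p$-th moment $|B|^{p/2}g^p$ of the discrepancy, is dimensionally inconsistent and gives $N^p$ at the top level). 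The missing ingredient is an $L^p$ transport inequality on a square: the paper uses \cite[Lemma 3.4]{goldmantrevisan}, namely $W_p^p(\mu,\lambda)\le \theta_p\,\mathrm{diam}(R)^p(\inf_R\lambda)^{1-p}\int_R|\mu-\lambda|^p$, whose $(\inf_R\lambda)^{1-p}$ normalization and $L^p$ (rather than $L^1$) right-hand side convert the moment hypothesis into a per-square expected cost of order $|B|\,g(|B|)^p$, so that the $4^k$ count cancels and each level contributes $O(N g(4^{-k-1}N)^p)$. Applying that lemma in turn requires a lower bound on the density of $\nu_k|_B$, which fails when $\mu_N(B)$ is small; the paper handles this with the good event $G_B=\{\mu_N(B)\ge 0.5\,\esp[\mu_N(B)]\}$, using the lemma on $G_B$ and a crude bound plus Markov's inequality on $G_B^c$. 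Neither the $L^p$ transport lemma nor the good/bad event split appears in your proposal, and without them the multiscale scheme does not close for $p>1$.
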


Note that for $N$ small we can get a better upper bound using the crude upper bound
$W_p^p(\mu_N, \overline{\mu}_N) \leq
\mathrm{diam}(Q_N)^p M_N$ which yields
$\esp[W_p^p(\mu_N, \overline{\mu}_N)] \leq
\mathrm{diam}(Q_N)^p A N 
= 2^{p/2}A N^{1+p/2}$.
Indeed, the latter inequality implies that, if 
$N \in (0,R)$ for some fixed $R$,
we can choose
$C_p$ larger than $(2 R)^{p/2}$ 
and obtain again the bound in Theorem 
\ref{thm:minimal-hypotheses}.

For $N$ i.i.d.\ uniformly distributed points on $Q_N$ 
and $p \geq 1$ fixed, Hypothesis \ref{Hypo1:moment_minimal} is true for $g$ constant. The bound obtained
in this case is proportional to $N (\log N)^p$, which is
not the optimal bound 
$N (\log N)^{p/2}$ given in Equation \eqref{eq:iid-micro}.
On the other hand, if the random measure $\mu_N$ is such that Hypothesis \ref{Hypo1:moment_minimal} is true for
$g(y) = \frac{1}{(1+\log y)^r}$
with $r > 1$, we obtain a linear bound
in that case (as $y \mapsto y^{-1} g(y)$ is integrable).

\begin{remark}
	If we are only interested in the $p=1$ case we could change
	Hypothesis \ref{Hypo1:moment_minimal} to
	\[\esp \Big[ \Big| \frac{\mu_N(B)}{\esp[\mu_N(B)]} - 1
	\Big|\Big]
	\leqslant 
	|B|^{-1/2} g(|B|) 
	\mbox{ for every square } B 
	\mbox{ that satisfies } |B| \geq 1
	\]
	and eliminate Hypothesis \ref{Hypo3:density_minimal} to
	obtain a universal constant $\alpha$ such that
	\[\mathbb{E}[W_1(\mu_N, \overline{\mu}_N)]  \leqslant \alpha 
	\mathbb E[\mu_N(Q_N)] \Big( 1 + g(1) + g(N) + \int_1^{N} g(y) \frac{\mathrm d y}{y} \Big).\]
\end{remark}

The previous theorem is stated in dimension $2$ which is the most studied case. Nevertheless, our proof applies to any dimension and a general theorem follows. Its proof is exactly the same up to minor modifications.

\begin{theorem}\label{thm:minimal-hypotheses dimension}
	Fix $p \geq 1$ and consider a real number $N\geq 1$. Let $\mu_{N}$ be a random measure on
	the square $Q_N=[0,N^{1/d}]^d$ such that the following conditions 
	hold.
	\begin{enumerate}
		\item There exists
		a monotone function (non-decreasing or non-increasing)
		$g:[1,N] \to (0,\infty)$ such that, for
		any square $B \subset Q_N$ that satisfies
		$|B| \geqslant 1$, it holds that
		\begin{equation} \label{Hypo1:moment_minimal_d} 
			\esp[|\mu_N(B)-\esp[\mu_N(B)]|^p] \leqslant |B|^{p\frac{(d-1)}{d}}
			g(|B|)^p.
		\end{equation}
		\item There exist two constants 
		$a \in (0,1]$ and $A \in [1,\infty)$
		such that for any
		square $B \subset Q_N$
		\begin{equation} \label{Hypo3:density_minimal_d}
			a |B| \leqslant  \esp[\mu_N(B)]
			\leqslant A |B| .
		\end{equation}
	\end{enumerate}
	Let $M_N = \mu_N(Q_N)$ and
	$\overline{\mu}_{N} = \frac{M_N}{\esp[M_N]}\mathbb E[\mu_{N}]$.
	Then there exists a constant $C_{p,d}$ such that
	\[\mathbb{E}[W_p^p(\mu_N, \overline{\mu}_N)]  \leqslant 
	(C_{p,d} a^{1-2p} A^p)
	N \Big( 1 + g(1) + g(N) + \int_1^{N} g(y) \frac{\mathrm d y}{y} \Big)^p.\]
\end{theorem}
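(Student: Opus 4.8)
The plan is to build an explicit hierarchical coupling along a dyadic tree and to control its cost scale by scale, the decisive point being a sharp estimate for the transport inside a single cube. Write $\nu=\overline{\mu}_N$ and set $m(B):=\mathbb E[\mu_N(B)]$. For $0\le k\le K:=\lfloor\tfrac1d\log_2 N\rfloor$, let $\mathcal P_k$ be the partition of $Q_N$ into $2^{dk}$ congruent subcubes of side $\ell_k=N^{1/d}2^{-k}$ and volume $v_k=N2^{-dk}$; the choice of $K$ guarantees $v_K\in[1,2^d)$, so every cube met in the construction has volume at least $1$ and Hypothesis \ref{Hypo1:moment_minimal_d} applies. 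For each $k$ define the interpolating measure by $\mu^{(k)}\big|_{B}=\mu_N(B)\,\nu\big|_{B}/\nu(B)$ on every $B\in\mathcal P_k$, i.e.\ the true mass of $\mu_N$ on $B$ spread proportionally to $\nu$. Then $\mu^{(0)}=\nu$ (a single cube carrying mass $M_N$), while $\mu^{(K)}$ is $\mu_N$ coarse-grained at unit scale. By the triangle inequality and Minkowski's inequality for the $L^p(\P)$-norm,
\[ \mathbb E[W_p^p(\mu_N,\nu)]^{1/p}\le \mathbb E[W_p^p(\mu_N,\mu^{(K)})]^{1/p}+\sum_{k=1}^{K}\mathbb E[W_p^p(\mu^{(k)},\mu^{(k-1)})]^{1/p}. \]
The first term is harmless: transporting $\mu_N$ to $\mu^{(K)}$ moves mass only inside unit cubes, so $W_p^p(\mu_N,\mu^{(K)})\le(\sqrt d)^pM_N$, whose expectation is $\lesssim AN$, contributing the bare ``$1$'' in the final bound.

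The heart of the argument is the per-scale term. Because both $\mu^{(k)}$ and $\mu^{(k-1)}$ are proportional to $\nu$ on each cube, the transport decouples over parents $B'\in\mathcal P_{k-1}$, and inside $B'$ one compares two measures proportional to $\nu$ on each of the $2^d$ children $B_i$, with prescribed child masses $\mu_N(B_i)$ and $\tfrac{\nu(B_i)}{\nu(B')}\mu_N(B')$. The crucial estimate — the step I expect to be the main obstacle — is that this inner transport costs
\[ W_p^p\big(\mu^{(k)}\big|_{B'},\mu^{(k-1)}\big|_{B'}\big)\le C_{d,p}\,\frac{\ell_{k-1}^p}{\nu(B')^{p-1}}\sum_{B_i\subset B'}\Big|\mu_N(B_i)-\tfrac{\nu(B_i)}{\nu(B')}\mu_N(B')\Big|^p, \]
rather than the naive bound $\ell_{k-1}^p\times(\text{excess mass})$. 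The denominator $\nu(B')^{p-1}$ is precisely what upgrades a first power of the discrepancy to a $p$-th power, and it is responsible both for the linear-in-$N$ scaling and for the eventual factors $a,A$. I would prove it by a one-dimensional monotone-rearrangement computation between two densities comparable to $\nu$, which shows the displacement is of order $\ell_{k-1}|\mathrm{disc}|/\nu(B')$ on a set of mass $\sim|\mathrm{disc}|$; the density lower bound of Hypothesis \ref{Hypo3:density_minimal_d} keeps the rearrangement well defined, and a slicing/tensorization argument lifts it to dimension $d$. The technical nuisance is that $\nu=\tfrac{M_N}{\mathbb E[M_N]}\mathbb E[\mu_N]$ carries the random normalisation $M_N/\mathbb E[M_N]$, so $\nu(B')$ is random; this is absorbed using $\tfrac{M_N}{\mathbb E[M_N]}a\le\tfrac{d\nu}{d\mathrm{Leb}}\le\tfrac{M_N}{\mathbb E[M_N]}A$ together with the elementary identity
\[ \mu_N(B_i)-\tfrac{\nu(B_i)}{\nu(B')}\mu_N(B')=D(B_i)-\tfrac{m(B_i)}{m(B')}D(B'),\qquad D(B):=\mu_N(B)-m(B), \]
whose right-hand side no longer sees the normalisation and is controlled directly by Hypothesis \ref{Hypo1:moment_minimal_d}; the leftover factor $(\mathbb E[M_N]/M_N)^{p-1}$ is handled by splitting on the event $\{M_N\ge\tfrac12\mathbb E[M_N]\}$ and bounding its rare complement crudely, which is what produces the extra powers of $a$.

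Granting the inner estimate, I would take expectations and insert Hypothesis \ref{Hypo1:moment_minimal_d} in the form $\mathbb E|D(B)|^p\le v^{p(d-1)/d}g(v)^p$ (here the $p$-th power is essential), bounding $\nu(B')^{p-1}$ from below by $(a\,v_{k-1})^{p-1}$ up to the normalisation factor. Counting $2^{dk}$ children at level $k$ and using $m(B_i)/m(B')\le(A/a)2^{-d}$, the powers of $v_k\sim 2^{-d}v_{k-1}$ cancel exactly, leaving
\[ \mathbb E\big[W_p^p(\mu^{(k)},\mu^{(k-1)})\big]\le C_{d,p}\,a^{1-2p}A^{p}\,N\,\big(g(v_k)^p+g(v_{k-1})^p\big). \]
Summing the $p$-th roots over $k$ collapses the telescope to $N^{1/p}\sum_{j=0}^{K}g(v_j)$, and since $g$ is monotone this sum is a Riemann sum in the variable $\log y$ for the integral, so $\sum_{j}g(v_j)\lesssim 1+\int_1^{\max(1,N)}g(y)\tfrac{dy}{y}$. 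Combining with the finest-scale term and raising to the power $p$ gives $\mathbb E[W_p^p(\mu_N,\overline{\mu}_N)]\le C_{p,d}\,a^{1-2p}A^p\,N\big(1+\int_1^{\max(1,N)}g(y)\tfrac{dy}{y}\big)^p$. The only delicate points are the inner transport estimate and the bookkeeping of the random normalisation; the remainder is a routine combination of Minkowski's inequality and the sum–integral comparison.
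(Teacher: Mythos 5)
Your architecture is the same as the paper's (dyadic interpolation between $\overline{\mu}_N$ and $\mu_N$, Minkowski's inequality in $L^p(\mathbb P)$, a per-cube transport lemma, a sum-to-integral comparison, and a crude unit-scale term), but the central inner estimate is false as stated, and the remedy you propose does not repair it. Take $d=1$, $p=2$, $B'=[0,2]$ with children $B_1=[0,1]$ and $B_2=[1,2]$, let $\nu$ be Lebesgue measure on $B'$, and suppose $\mu_N(B_1)=\epsilon$ and $\mu_N(B_2)=0$. Then $\mu^{(k-1)}|_{B'}=\tfrac{\epsilon}{2}\,\mathrm{Leb}|_{[0,2]}$ and $\mu^{(k)}|_{B'}=\epsilon\,\mathrm{Leb}|_{[0,1]}$, so by homogeneity $W_2^2\big(\mu^{(k)}|_{B'},\mu^{(k-1)}|_{B'}\big)=c\,\epsilon$ with $c>0$ (a mass of order $\epsilon$ must travel a distance of order $1$), whereas your right-hand side $C\,\ell_{k-1}^2\,\nu(B')^{-1}\sum_i\big|\mu_N(B_i)-\tfrac12\mu_N(B')\big|^2$ is of order $\epsilon^2$. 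The correct denominator in such an estimate is the infimum of the \emph{density of the transported measure}, which carries the random factor $\mu_N(B')/\nu(B')$; your heuristic ``displacement of order $\ell_{k-1}\times(\text{discrepancy})/\nu(B')$'' is only valid when $\mu_N(B')$ is comparable to $\nu(B')$. Hypothesis \ref{Hypo3:density_minimal_d} bounds the density of $\mathbb E[\mu_N]$ from below, not that of $\mu^{(k-1)}|_{B'}$, and your single global event $\{M_N\ge\tfrac12\mathbb E[M_N]\}$ gives no control on $\mu_N(B')/\mathbb E[\mu_N(B')]$ for a small sub-cube $B'$.

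The repair, which is exactly what the paper does, is local: for every cube $B'$ at every scale introduce the good event $G_{B'}=\{\mu_N(B')\ge \tfrac12\mathbb E[\mu_N(B')]\}$. On $G_{B'}$ the density of the interpolating measure is bounded below by $\tfrac{a}{2}$ (times a harmless global normalisation that cancels if one works with $\mathbb E[\mu_N]$ instead of $\overline{\mu}_N$, as the paper does), and Lemma \ref{lem:goldmantrevisan} then yields precisely your inner estimate. On $G_{B'}^c$ one uses the crude bound $W_p^p\le \mathrm{diam}(B')^p\mu_N(B')\le \mathrm{diam}(B')^p\cdot\tfrac12\mathbb E[\mu_N(B')]$ together with Markov's inequality $\mathbb P(G_{B'}^c)\le 2^p\,\mathbb E\big[|\mu_N(B')/\mathbb E[\mu_N(B')]-1|^p\big]$, which produces a contribution of the same order as the good-event term. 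With this two-case decomposition inserted at every scale, the rest of your argument (the identity removing the normalisation from the discrepancy, the exact cancellation of the powers of $v_k$, the Riemann-sum comparison using the monotonicity of $g$, and the unit-scale term) goes through and coincides with the paper's proof.
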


Since $aN \leq \mathbb E[\mu_N(Q_N)] \leq A N$, we should think of $N$ as being
the mass of $\mu_N(Q_N)$ or, if $\mu_N$ is a point process,
as the number of particles of $\mu_N$. As explained 
in Subsection \ref{sub:Wasserstein}, we may convert
this theorem into one about measures on the unit square $Q_1$ by scaling. In this setting, for $B \subset Q_1$,
\eqref{Hypo1:moment_minimal_d} becomes 
$\esp[|\mu_N(B)-\esp[\mu_N(B)]|^p] \leqslant N^{p \frac{(d-1)}{d}} |B|^{p\frac{(d-1)}{d}}
g(N|B|)^p$ and \eqref{Hypo3:density_minimal_d} becomes 
$ 	a N|B| \leqslant  \esp[\mu_N(B)]
\leqslant A N|B| $. Since $N$ may be thought of
as the number of particles, $\mu_N/N$ can be thought of as
the empirical measure and 
\eqref{Hypo3:density_minimal_d} would be related to
bounds on the density of $\lim_{N \to \infty} \mu_N/N$, the so-called
equilibrium measure. In this way,
an hypothesis like \eqref{Hypo3:density_minimal_d} makes sense on
metric spaces with different classes of shapes.
An example of such bounds 
for some sequences of point processes on manifolds can be found in
\cite[Theorem 1.6]{CHE20}.
Moreover, we may replace $Q_1$ by a 
flat $d$-dimensional torus.
Theorem \ref{thm:minimal-hypotheses dimension}
still holds in this case
since the distance in the torus is smaller than the distance in the cube.

\begin{remark}[On the first hypothesis of Theorem
	\ref{thm:minimal-hypotheses dimension}]\label{remark lattice}
	At first glance, the exponents in Condition \eqref{Hypo1:moment_minimal_d} seem somewhat peculiar. 
    Nevertheless, they are more illustrative
    if we think $ |B|^{\frac{(d-1)}{d}}$ as essentially being the area $|\partial B|$
	of the boundary $\partial B$ so that 
    \eqref{Hypo1:moment_minimal_d} 
    becomes a condition of the form $\|\mu_N(B)-\esp[\mu_N(B)]\|_p \leqslant |\partial B| g(|B|)$.
    In some sense, this condition
    is asking to what extend the 
    fluctuations of the number of particles
    happen only through its boundary.
    
    Instances of classical point processes 
    where fluctuations happen through
    the boundary of $B$,
    i.e., where
	$\| \mu_N(B) - \mathbb E[\mu_N(B)]\|_p 
	\leq C |\partial B|$ 
    for some constant $C$,
	are the so-called perturbed lattices. More precisely, if we consider
	a family of random variables
	$(X_z)_{z  \in \mathbb Z^d}$ 
	bounded by a constant $R>0$, then the point process formed by
	$(z+X_z)_{z \in \mathbb Z^d}$ (a perturbed lattice) satisfy Condition 
	\eqref{Hypo1:moment_minimal_d}  with $g$ constant. 
    Then, we may reinterpret
    Condition \eqref{Hypo1:moment_minimal_d} 
    as asking how much the number of particles
    fluctuates ``better'' than in the 
    case of perturbed lattices.

	If the perturbation variables $(X_z)_{z \in \mathbb Z^d}$ 
    are independent,
	then we have an even better bound 
	$\| \mu_N(B) - \mathbb E[\mu_N(B)]\|_2
	\leq C\sqrt{|\partial B|} $ and this condition is sometimes considered
	as the hyperuniformity of type I in general dimension.
	An example of the latter ``type I'' kind of bound for domains
	with smooth boundary on manifolds
	can be found in \cite[Theorem 1.6]{CHE20}.
	We may also see \cite{dereudreflimmelhuesmannleble}
	for connections between hyperuniformity and perturbed lattices.
\end{remark}

A particular interesting case 
is the one for a $p$-th moment
version of type III hyperuniform point process
that, since we have only defined type III for $\mathbb R^2$, is stated in this setting.

\begin{corollary}\label{cor:main}
	Fix $p\geq 1$ and let $X$ be a point process on $\RR^2$
	and let $\esp[X]$ be its expected value. Suppose that the following
	properties hold.
	\begin{enumerate}
		\item\label{Hypo1:moments_main} There exist
		$ \varepsilon \in (0,1)$ and $\gamma>0$ such that, for
		any square $B \subset \mathbb R^2$ with $|B| \geqslant 1$,
		\begin{equation}
			\esp\big[|X(B)-\esp[X(B)]|^p\big] \leqslant 
			\gamma |B|^{(1-\varepsilon)p/2}.
		\end{equation}
		\item\label{Hypo3:density_main} There exist $a,A>0$ such that for any square $B \subset \mathbb R^2$,
		\begin{equation} 
			a |B| \leqslant  \esp[X(B)] \leqslant A |B| .
		\end{equation}
		This is equivalent to requiring that $\mathbb E[X]$ has a density bounded from above and from below with respect to the Lebesgue measure on 
		$\mathbb R^2$.
	\end{enumerate}
	Then there exists $\alpha,\tilde \alpha > 0$ such that
	for any $N  > 0$, 
	if $C_N= [-\sqrt{N}/2,\sqrt{N}/2]^2$,
	\[ \tilde \alpha N \leq \mathbb{E}\bigg[W_p^p\bigg(X|_{C_N}, \frac{X(C_N)}{\esp[X(C_N)]} \esp[X]|_{C_N}\bigg)\bigg] 
	\leqslant \alpha N.\]
\end{corollary}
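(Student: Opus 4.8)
The plan is to obtain the upper bound from Theorem \ref{thm:minimal-hypotheses} and the lower bound from the deterministic estimate of Lemma \ref{lem:lowerbound}, after reducing everything to the random measure $\mu_N := X|_{C_N}$ on the box $C_N$. Since the Wasserstein cost and both hypotheses are invariant under the translation carrying $C_N$ onto $Q_N=[0,\sqrt N]^2$, I may identify $C_N$ with $Q_N$ and apply the two-dimensional theorem directly. Writing $M_N=\mu_N(C_N)=X(C_N)$ and $\esp[M_N]=\overline X(C_N)$, the reference measure $\overline\mu_N=\frac{M_N}{\esp[M_N]}\esp[\mu_N]$ is exactly $\frac{X(C_N)}{\overline X(C_N)}\overline X|_{C_N}$, so the quantity to estimate is precisely $\esp[W_p^p(\mu_N,\overline\mu_N)]$.

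For the upper bound I would first observe that Hypothesis \ref{Hypo1:moments_main} is Hypothesis \ref{Hypo1:moment_minimal} of Theorem \ref{thm:minimal-hypotheses} for the monotone choice $g(y)=\gamma^{1/p}y^{-\varepsilon/2}$: for a square $B\subset C_N$ with $|B|\geq 1$ one has $\mu_N(B)=X(B)$ and $\esp[\mu_N(B)]=\overline X(B)$, so $\esp[|X(B)-\overline X(B)|^p]\leq \gamma|B|^{(1-\varepsilon)p/2}=|B|^{p/2}g(|B|)^p$. Hypothesis \ref{Hypo3:density_main} becomes Hypothesis \ref{Hypo3:density_minimal} after replacing $a$ by $\min(a,1)$ and $A$ by $\max(A,1)$. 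The decisive point is that this $g$ is integrable against $\mathrm dy/y$, namely $\int_1^{\infty}g(y)\,\tfrac{\mathrm dy}{y}=\gamma^{1/p}\int_1^\infty y^{-1-\varepsilon/2}\,\mathrm dy=\tfrac{2}{\varepsilon}\gamma^{1/p}<\infty$ because $\varepsilon>0$; hence the integral factor in the conclusion of Theorem \ref{thm:minimal-hypotheses} is bounded by a constant independent of $N$, giving $\esp[W_p^p(\mu_N,\overline\mu_N)]\leq \alpha N$ for all $N\geq 1$. For $N<1$ I would instead use the crude bound $W_p^p(\mu_N,\overline\mu_N)\leq \operatorname{diam}(C_N)^p M_N=(2N)^{p/2}M_N$, whose expectation is at most $2^{p/2}AN^{1+p/2}\leq 2^{p/2}AN$; enlarging $\alpha$ then gives the upper bound for every $N>0$.

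For the lower bound I would condition on $n:=X(C_N)$. On $\{n\geq 1\}$, Lemma \ref{lem:lowerbound} applies to $\mu_N=\sum_i\delta_{x_i}$ and to $\overline\mu_N$, whose density is bounded above by $\frac{n}{\overline X(C_N)}A\leq \frac{nA}{aN}$; with $d=2$ this yields the deterministic inequality $W_p^p(\mu_N,\overline\mu_N)\geq (\alpha_2)^p(a/A)^{p/2}N^{p/2}\,n^{1-p/2}$. The main obstacle is that the random normalisation $\frac{X(C_N)}{\overline X(C_N)}$ makes this bound random through $n^{1-p/2}$, whose monotonicity in $n$ even changes according to whether $p\leq 2$ or $p>2$. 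I would resolve this by concentration: Hypothesis \ref{Hypo1:moments_main} with $B=C_N$ (legitimate as $|C_N|=N\geq 1$) combined with Markov's inequality gives $\P(|n-\overline X(C_N)|>\overline X(C_N)/2)\leq \tfrac{2^p\gamma}{a^p}N^{-(1+\varepsilon)p/2}$, which is below $\tfrac12$ once $N$ exceeds a constant $N_0=N_0(p,\varepsilon,\gamma,a)$. On the complementary event $\tfrac{a}{2}N\leq n\leq \tfrac{3A}{2}N$, whence $n^{1-p/2}\geq c\,N^{1-p/2}$ for some $c=c(a,A,p)>0$ irrespective of the sign of $1-p/2$, so $W_p^p(\mu_N,\overline\mu_N)\geq \tilde\alpha' N$ on an event of probability at least $\tfrac12$; taking expectations gives $\esp[W_p^p(\mu_N,\overline\mu_N)]\geq \tilde\alpha N$ for $N\geq N_0$, the bounded range $1\leq N\leq N_0$ being handled by strict positivity (since $\overline X(C_N)\geq a$ forces $\P(n\geq 1)>0$) together with a compactness argument. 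I anticipate that the genuinely small regime $N<1$ is the true limitation: a single isolated point in a box of diameter $\sqrt N$ already produces $W_p^p\asymp N^{p/2}$ while occurring with probability of order $N$, so the cost is of order $N^{1+p/2}=o(N)$, and the linear lower bound is informative only for $N\gtrsim 1$, in line with the remark following Theorem \ref{thm:minimal-hypotheses}.
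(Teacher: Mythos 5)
Your upper bound is exactly the paper's argument: apply Theorem \ref{thm:minimal-hypotheses} to $\mu_N=X|_{C_N}$ with $g(y)=\gamma^{1/p}y^{-\varepsilon/2}$ and use that $\int_1^\infty y^{-1-\varepsilon/2}\,\mathrm dy<\infty$, the constants $a,A$ being adjusted to lie in $(0,1]$ and $[1,\infty)$. (The paper writes $g(x)=\gamma x^{-\varepsilon/2}$; your exponent $1/p$ on $\gamma$ is the correct reading of Hypothesis \ref{Hypo1:moment_minimal}.) For the lower bound your outline is also the paper's --- restrict to an event on which $X(C_N)$ is comparable to $N$, apply Lemma \ref{lem:lowerbound} there, and control the probability of the complement --- but the two proofs diverge in how that probability is bounded, and your version is the one that actually closes. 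The paper bounds $\esp[|X(C_N)/N-1|]$ by the trivial $A+1$ and is then forced to take $\delta=2(A+1)>1$, which makes its constant $c_\delta=\alpha_2^p(1-\delta)(1+\delta)^{-p/2}a^{p/2}$ negative and the conclusion vacuous; your use of Hypothesis \ref{Hypo1:moments_main} with $B=C_N$ to get $\P\big(|X(C_N)-\overline X(C_N)|>\tfrac12\overline X(C_N)\big)\leq 2^p\gamma a^{-p}N^{-(1+\varepsilon)p/2}$ is the natural repair, and you also center the event at $\overline X(C_N)$ rather than at $N$, which matters when $a<1$.

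Two caveats remain. First, your treatment of the range $1\leq N\leq N_0$ is not yet a proof: positivity of $\esp[W_p^p]$ for each fixed $N$ does not yield a uniform constant over a continuum of values of $N$ unless you establish some semicontinuity of $N\mapsto\esp[W_p^p]$, which you do not. A cleaner patch is to keep the deterministic bound $W_p^p\geq\alpha_2^p(a/A)^{p/2}N^{p/2}X(C_N)^{1-p/2}\indic_{\{X(C_N)\geq1\}}$ and supply a lower bound on $\P(X(C_N)\geq1)$ that is uniform over $[1,N_0]$, for instance via $\esp[X(C_N)]\leq\esp[X(C_N)^p]^{1/p}\P(X(C_N)\geq1)^{1-1/p}$ together with the moment hypothesis, rather than invoking compactness. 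Second, your closing observation is correct and is in fact a defect of the statement as written: since $W_p^p\leq\mathrm{diam}(C_N)^pX(C_N)$, every process satisfying Hypothesis \ref{Hypo3:density_main} has $\esp[W_p^p]\leq 2^{p/2}AN^{1+p/2}=o(N)$ as $N\to0$, so no $\tilde\alpha>0$ can work for all $N>0$; the lower bound is only meaningful for $N$ bounded away from $0$, a regime the paper's proof silently ignores as well.
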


The $d$-dimensional version would ask for a bound 
$    \esp\big[|X(B)-\esp[X(B)]|^p\big]^{1/p} \leqslant 
\gamma |B|^{(1-\varepsilon) \frac{(d-1)}{d}}$, which is weaker than the usual
type III hyperuniformity found in the literature whose $p$-th moment
version would be
$\esp\big[|X(B)-\esp[X(B)]|^p\big]^{1/p} \leqslant 
\gamma |B|^{(1-\varepsilon) /2}$. 

\subsection*{The special case of determinantal point processes}
In order to apply Theorem \ref{thm:minimal-hypotheses} or \ref{thm:minimal-hypotheses dimension}, the most difficult hypothesis to check is \ref{Hypo1:moment_minimal}, which amounts to show some kind of $p$-hyperuniformity for point processes. Showing these bounds can be a very delicate task, but fortunately, in the special case of determinantal point processes, the following Proposition states that it is sufficient to check this bound for $p=2$ to obtain it for any value of $p$. Hence, hyperuniform determinantal point processes satisfy Hypothesis \ref{Hypo1:moment_minimal} for any $p \geqslant 1$. 

This comes from the fact that, for Hermitian determinantal point processes, the $L^2$-norm of the
deviation essentially bounds its $L^p$-norm, which is the exact converse of the Jensen inequality on moments. This property is sometimes called hypercontractivity.

\begin{proposition}\label{prop:DPP}
	Fix $p \geq 1$. 
	There exists $\gamma_p > 0$ such that, for any 
	Hermitian
	determinantal point process $X$ and any measurable set $B$
	such that $\mathbb E[X(B)]$ is finite, we have
	\[\esp\big[|X(B)-\esp[X(B)]|^p\big] 
	\leq \gamma_p (\Var(X(B))^{p/2} + 1).\]
	In particular, a Hermitian determinantal point process $X$ on $\mathbb R^2$
	satisfying
	the first hypothesis of Corollary
	\ref{cor:main} for $p=2$ 
	also satisfies it for every $p \in [1,\infty)$ and the same $\varepsilon$. 
\end{proposition}

\section{Application to two specific point processes}\label{sec:App}
\subsection{Ginibre ensemble}
One of the most studied hyperuniform point process is the Ginibre ensemble, which can be seen either as the limit as $N$ goes to infinity of the eigenvalue process of a $N \times N$ random matrix with 
i.i.d.\ complex Gaussian coefficients, or as the determinantal point process in the plane with kernel $K(z,w) = e^{z \bar{w}-|z|^2/2-|w|^2/2}$. This point process has unit intensity, is invariant under translation and rotations, and is known to be hyperuniform of type I. The recent results \cite{lachiezereyyogeshwaran} and \cite{huesmannleble} imply that this point process can be seen as a perturbed lattice where the perturbations have a finite second moment. 
The next consequence of Theorem \ref{thm:minimal-hypotheses} may be viewed 
as an $L^p$ analogue of this result.

\begin{proposition}\label{Ginibre}
Let $\mathrm{Gin}$ be the Ginibre point process on $\mathbb{C}$, defined as above. For any $p\geq 1$, there exists a family of identically distributed random variables $(\xi_k)_{k\in \mathbb{Z}^2}$ whose joint distribution is invariant under translations satisfying $\mathbb{E}(|\xi_{(0,0)}|^p) < +\infty$ such that, in law,
\[\{ k + \xi_k, k \in \mathbb{Z}^2     \} = \mathrm{Gin}.\]
\end{proposition}

In the case $p=2$, this result is already included in \cite{lachiezereyyogeshwaran} and \cite{huesmannleble}, but their approaches cannot be extended to higher values of $p$. With the notations from \cite{huesmannleble, Huesmann2016, EHJM}, Proposition \ref{Ginibre} can be stated as $\mathrm{Wass}_p(\mathrm{Gin},\mathrm{Leb}) <+\infty$, where $\mathrm{Wass}_p$ is the Wasserstein distance on translation invariant point processes associated to the cost function $c(x,y)=|x-y|^p$. This distance on point processes will be presented in the proof of Proposition \ref{Ginibre}.
Obtaining information on the regularity of the perturbation is in general a very delicate task. To our knowledge, the only other classical point process for which such information is known is the zero process of the Flat Gaussian Analytic Functions, which is known to be a perturbed lattice with sub-Gaussian perturbations \cite{SodinTsirelsonII}. An interesting question would be to show that the Ginibre ensemble can be viewed as a sub-exponential or sub-Gaussian perturbed lattice, and we believe that the techniques presented in the present paper could be generalized to cover this setting. 

\subsection{Poisson point process with $d\geq 3$.}
In dimension $d\geq 3$ and with $p=2$, one can apply Theorem \ref{thm:minimal-hypotheses dimension}  to the empirical measure of $N$ i.i.d.\ points, 	uniformly distributed on the $d$-cube $Q_N$. We may choose
$g(x)=x^{\frac{1}{d} - \frac{1}{2}}$ and we recover the fact that the Poisson point process is at finite $\mathrm{Wass}_2$ distance from the Lebesgue measure in dimension greater than $3$. This implies that the Poisson point process can be viewed as an $L^2$-perturbed lattice, using the same tools as in the Ginibre case.

\section{Perspectives, related results and comments}\label{sec:Comments}
Two papers \cite{huesmannleble} and \cite{lachiezereyyogeshwaran} were done independently and simultaneously to this paper and give very close results. We refer to the recent survey \cite{lachiezereySurvey} for a unified presentation of the results and some elements of comparison.

In \cite[Theorem 3]{lachiezereyyogeshwaran}, Lachièze-Rey and Yogeshwaran obtained a theorem which deals with the same question, but with a very different approach
and in the setting of stationary point processes.
Comparing their hypothesis on the integrability of the reduced pair correlation measure and the first hypothesis of
Theorem \ref{thm:minimal-hypotheses} does not seem obvious. 
In \cite{huesmannleble}, the authors obtain a full description in dimension $2$ of the connection between hyperuniformity, the finiteness of the Coulomb energy of the point process, and transport properties. One of their results is to prove that star-hyperuniformity, which is very close to Hypothesis \ref{Hypo1:moment_minimal} from Theorem \ref{thm:minimal-hypotheses}, implies a control of the transport cost by $N(1+ \int_1^{+\infty} g(y)^2/y dy)$, which is better than the bound involving $g(y)/y$ that we obtain. These two bounds give different results in the case where $g(y)=\log(y)^{-a}$, with $a\in (1/2,1)$.

The two papers \cite{huesmannleble} and \cite{dereudreflimmelhuesmannleble}
have, very recently, provided a different perspective on the main result
in the setting of stationary point processes. On the one hand, Theorems 1 and 4 of \cite{huesmannleble} together say that a condition which is slightly weaker than the hypothesis of
Theorem \ref{thm:minimal-hypotheses}  for $p=2$ but stronger than classical hyperuniformity implies $2$-Wasserstein linear (in $N$) bounds. On the other hand,
we may find in \cite[Theorem 1]{dereudreflimmelhuesmannleble} 
that an $L^2$-perturbed lattice in dimension $2$ is hyperuniform
which can be restated, by using an adapted version  of \cite[Proposition 2]{lachiezereyyogeshwaran}, 
as saying that
linear $2$-Wasserstein bounds imply hyperuniformity.
These implications nearly give the full picture for the relation between hyperuniformity and $2$-Wasserstein linear bounds 
for $2$-dimensional stationary point processes.

In dimension greater than $2$, there exist perturbed lattices which are not hyperuniform with the usual definition of hyperuniformity
given by comparing the process with a stationary Poisson point process. The connection between hyperuniformity, Wasserstein bounds and perturbed lattices is now almost clear in dimension 2, but our result
tells us that the more pertinent bound
is the one obtained by comparing the process with a perturbed lattice
as explained in Remark \ref{remark lattice} above. This is an interesting direction for further investigations.

To conclude, this paper has two main assets. First, we provide bounds which are valid for general random measures, not only stationary point processes, any dimension and any finite $p \geq 1$. Second, the proof is short and not very technical.
One direction of improvement for the main result of this paper would be to relax the shape dependency for the moments estimate, i.e., to consider, for instance, disks instead of squares in the first hypotheses of
Theorem \ref{thm:minimal-hypotheses}, or to replace $Q_N$ by other shapes or more general metric spaces.

\section*{Acknowledgments}
We thank Michaël Goldman, Jonas Jalowy, Raphaël Lachièze-Rey, Clément Sarrazin and Thomas Leblé for useful discussions and insights. This work was supported by the CNRS via the PEPS funding ``\emph{Vitesse de convergence de mesures spectrales empiriques}''. We are also glad to acknowledge support from 
the GdR \emph{Matrices et Graphes Aléatoires} (now included in the RT \emph{Mathématiques et Physique}) which made this collaboration possible.

\section{Proofs}\label{sec:Proofs}
We prove all results of Section \ref{sec:Results} 
in the same order as they are stated.
\subsection{Proof of Theorem \ref{thm:minimal-hypotheses}}

The main idea is to interpolate between $\mu_N$ and $\overline{\mu}_N$ by starting with $\overline{\mu}_N$ on $[0,\sqrt N]^2$ and dividing the square in four equal squares. Consider a measure $\nu_1$
that on each of those new squares $B$ is
proportional to
${\overline{\mu}_N}|_B$ but
whose mass is $\mu_N(B)$. Then, divide each of the new squares in four equal squares $B'$
and consider again a measure $\nu_2$ proportional to
${\overline{\mu}_N}|_{B'}$ but
whose mass is $\mu_N(B')$. Repeat this procedure until some $K$-th subdivision where the smallest squares
have size of order $1$. Now, the point is
that to compare $\nu_{k}$ with $\nu_{k+1}$ we may restrict ourselves
to the squares in the $k$-th subdivision, and our task becomes a comparison between
some measure and a ``subdivision'' of it in four equal squares.
The key is \cite[Lemma 3.4]{goldmantrevisan}
which is written here under the name of Lemma \ref{lem:goldmantrevisan}.
Finally, we compare the $K$-th step measure $\nu_K$ with the measure $\mu_N$
by comparing again $\nu_K$ and $\mu_N$ on each
of the squares of the $K$-th subdivision
but now using that
the smallest squares
have size of order $1$.

For future use, 
notice first that, for any square $B \subset [0,\sqrt N]^2$,
\begin{equation}
	\label{eq:Hyp4}
	\esp \Big[ \Big| \frac{\mu_N(B)}{\esp[\mu_N(B)]} - 1
	\Big|^p\Big]
	=\frac{1}{\esp[\mu_N(B)]^p}
	\esp
	\Big[ \Big| \mu_N(B)- \esp[\mu_N(B)]
	\Big|^p\Big]
	\leqslant 
	\frac{1}{a^p} |B|^{-p/2} g(|B|)^p
\end{equation}
provided that $|B|\geq 1$. We shall suppose $g$ non-increasing,
the non-decreasing case works in a similar way.

\vspace{3mm}
\noindent
\textbf{Step 1 -- Interpolation:}
We start by defining a finite sequence of intermediate measures which interpolate between
$\overline{\mu}_N$ and $\mu_N$.
Let us denote $K = \lfloor \log_4(N) \rfloor = \lfloor \log_2(\sqrt N) \rfloor \in \N$. Let us consider, for each $k \in \{0, \dots, K  \}$,
the set  $\mathcal B_k$ of $4^k$ squares which form a partition of  $[0,\sqrt{N})^2$ obtained by recursively dividing each square into 4 equal squares.
Equivalently,
let us partition $[0,\sqrt N)$ in $2^k$ intervals
of length $\sqrt N 2^{-k}$
as
\[\mathcal I_k = \Big\{\Big[\frac{\sqrt N(i-1)}{2^k}, \frac{\sqrt N i}{2^k}\Big): i \in \{0,\dots, 2^k-1\} \Big\}\]
and use this to partition the square $[0,\sqrt{N})^2$
in the $4^k$ squares given by products of intervals
from $\mathcal I_k$, i.e.,
$\mathcal B_k = \{ I_1 \times I_2 : I_1, I_2 \in \mathcal I_k\}$. Notice that the smallest
square obtained in this way has length
$\sqrt{N} 2^{-K} \geq
\sqrt{N} 2^{-\log_2(\sqrt N)} = 1$.
For any $k \in \{1, \dots K\}$, we define the measure
\[  \nu_k = \sum_{B \in \mathcal B_k} \frac{\mu_N(B)}{\overline{\mu}_N(B)}
{\overline{\mu}_N}|_B = \sum_{B \in \mathcal B_k} \frac{\mu_N(B)}{\esp[\mu_N(B)]}
\esp[\mu_N]|_B. \]
In particular, $\nu_0=\overline{\mu}_N$.
Let us use the notation $\nu_{K+1} = \mu_N$.
Since the $L^p$ norm of a metric satisfies the triangle inequality, we have that
\[ \esp[W_p^p(\mu_N,\overline{\mu}_N)]^{1/p} \leqslant \sum_{k=0}^K \esp 
[ W_p^p(\nu_k,\nu_{k+1})]^{1/p} . \]
Indeed, the triangle inequality for $W_p$ gives us
\[W_p(\mu_N,\overline{\mu}_N) \leqslant \sum_{k=0}^{K} W_p(\nu_k,\nu_{k+1})\]
so that, taking the $L^p$ norm
$\|X\|_{L^p} = \esp[|X|^p]^{1/p}$ at both sides
we get
\[\|W_p(\mu_N,\overline{\mu_N})\|_{L^p}
\leqslant \big\|\sum_{k=0}^{K} W_p(\nu_k,\nu_{k+1})\big\|_{L^p}
\leqslant \sum_{k=0}^K \big\| W_p(\nu_k,\nu_{k+1})\big\|_{L^p}\]
which is what we wanted.

\vspace{3mm}
\noindent
\textbf{Step 2 -- Comparison between $\nu_k$ and $\nu_{k+1}$:} We are now left to bound from above the quantity $\esp[W_p^p(\nu_k,\nu_{k+1})]$, which will be done by controlling the transport cost on each of the squares of $\mathcal B_k$ and then gluing together the squares.
For a given level $k \in \{0,\dots,K-1\}$ and a given square $B \in \mathcal B_k$,
define the good event $G_B$ by
\[ G_{B} = \{
\mu_N(B)  \geq 0.5 \esp[\mu_N(B)]  \} .\]
What is important from the number $0.5$ in the definition of $G_B$ is that it is in the interval $(0,1)$,
we may take whichever we prefer.
We start by writing, for $B \in \mathcal B_k$,
\begin{equation}\label{eq:decompo}
	\esp[W_p^p(\nu_{k}|_{_{B}},\nu_{k+1}|_{_{B}})]= \esp[W_p^p(\nu_{k}|_{_{B}},\nu_{k+1}|_{_{B}})1_{G_B}]+ \esp[W_p^p(\nu_{k}|_{_{B}},\nu_{k+1}|_{_{B}})1_{G_B^c}]
\end{equation}
and we bound each term separately.

\vspace{3mm}
\noindent
\textbf{Substep 2.1 -- Inside the event $G_B$:} For the first term, we notice that, on the event $G_B$, the density of $\nu_{k}|_{B}$ with respect to Lebesgue measure
is bounded from below by $0.5 a$, where $a$ is given in Hypothesis
\ref{Hypo3:density_minimal}. Hence we can use
the following lemma from \cite[Lemma 3.4]{goldmantrevisan}.

\begin{lemma} \label{lem:goldmantrevisan}
	Let $R$ be a square and let $\mu$ and $\lambda$ be two measures on $R$ with equal mass, both absolutely continuous with respect to Lebesgue and such that $\inf_R \lambda >0$. 
	Then for every $p\geq1$,
	\[ W_p^p(\mu,\lambda) \leqslant \theta_p\frac{\mathrm{diam}(R)^p}{(\inf_R \lambda)^{p-1}} \int_R |\mu-\lambda|^p, \]
	where the constant $\theta_p$ only depends on $p$.
\end{lemma}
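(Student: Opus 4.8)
The plan is to combine the dynamic (Benamou--Brenier type) formulation of the Wasserstein distance with an elliptic estimate for the divergence equation on the square. Write $f$ and $h$ for the Lebesgue densities of $\mu$ and $\lambda$, and set $g = h - f$, which satisfies $\int_R g = 0$ because $\mu$ and $\lambda$ have equal mass (and we may assume $\int_R |g|^p < \infty$, else the right-hand side is infinite and there is nothing to prove). First I would interpolate linearly at the level of densities, setting $\rho_t = (1-t)h + tf$ for $t \in [0,1]$, so that $\rho_0 = \lambda$, $\rho_1 = \mu$ and $\partial_t \rho_t = f - h = -g$. The crucial feature of starting the interpolation from $\lambda$ is the pointwise lower bound $\rho_t \geq (1-t)\inf_R \lambda$, which controls the degeneracy of the interpolating densities away from the endpoint $t=1$.

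Next I would produce a time-independent flux. Using solvability of the divergence equation on the square (the Bogovski\u{\i} operator, or equivalently $\sigma = \nabla\phi$ for the Neumann problem $\Delta \phi = g$, $\partial_n\phi = 0$, which is solvable since $\int_R g = 0$), choose a vector field $\sigma : R \to \mathbb R^d$ with $\nabla\cdot\sigma = g$ in $R$ and $\sigma\cdot n = 0$ on $\partial R$. Setting $v_t = \sigma/\rho_t$, the pair $(\rho_t,v_t)$ solves the continuity equation $\partial_t\rho_t + \nabla\cdot(\rho_t v_t) = 0$ with no flux through $\partial R$, so that all mass stays inside the convex set $R$. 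The dynamical formulation of $W_p$ then gives
\[ W_p(\mu,\lambda) \leq \int_0^1 \Big(\int_R |v_t|^p\,\rho_t\,\mathrm d x\Big)^{1/p}\mathrm d t = \int_0^1 \Big(\int_R \frac{|\sigma|^p}{\rho_t^{\,p-1}}\,\mathrm d x\Big)^{1/p}\mathrm d t. \]

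Then I would integrate in time. Using $\rho_t \geq (1-t)\inf_R\lambda$ and pulling the time factor out, the right-hand side is at most
\[ \frac{\|\sigma\|_{L^p(R)}}{(\inf_R\lambda)^{(p-1)/p}} \int_0^1 (1-t)^{-(p-1)/p}\,\mathrm d t = p\,\frac{\|\sigma\|_{L^p(R)}}{(\inf_R\lambda)^{(p-1)/p}}, \]
the key point being that the exponent $(p-1)/p < 1$ makes the time integral finite and equal to $p$. Raising to the power $p$ yields $W_p^p(\mu,\lambda) \leq p^p (\inf_R\lambda)^{-(p-1)} \|\sigma\|_{L^p(R)}^p$.

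Finally I would feed in the elliptic estimate. By a scaling argument from the unit square (the divergence operator is homogeneous of degree $-1$), the flux can be chosen with $\|\sigma\|_{L^p(R)} \leq C_{p,d}\,\mathrm{diam}(R)\,\|g\|_{L^p(R)}$, the single power of $\mathrm{diam}(R)$ reflecting exactly this homogeneity. Combining the two displays gives the claimed bound with $\theta_p = (p\,C_{p,d})^p$, which depends only on $p$ since the dimension is fixed. The main obstacle is this last step: establishing the $L^p$ bound for $\sigma$ with the correct $\mathrm{diam}(R)$ scaling and a constant independent of the square, which rests on Calder\'on--Zygmund theory for the Neumann Laplacian (or the known mapping properties of the Bogovski\u{\i} operator). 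The only other point needing care is justifying the dynamical inequality at the stated regularity, which is handled by a routine mollification/approximation of $f$ and $h$.
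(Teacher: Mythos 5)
The paper does not actually prove this lemma: it is imported verbatim from Goldman--Trevisan (cited as Lemma 3.4 there), so there is no in-paper proof to compare against. Your argument is correct and is essentially the proof used in that source: linear interpolation of densities, a time-independent flux solving the divergence equation with no flux through $\partial R$, and the dynamic (curve-length) form of the Wasserstein bound. You correctly identify the one genuinely delicate point, namely that one must use $W_p(\mu,\lambda)\leq\int_0^1\big(\int_R|\sigma|^p\rho_t^{1-p}\big)^{1/p}\,\mathrm dt$ rather than the action form $\int_0^1\int_R|\sigma|^p\rho_t^{1-p}$, since only the exponent $(p-1)/p<1$ makes the time integral converge; and the scaling $\|\sigma\|_{L^p(R)}\leq C\,\mathrm{diam}(R)\,\|g\|_{L^p(R)}$ is right. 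A minor remark: the $L^p$ bound on $\sigma$ itself (as opposed to $\nabla\sigma$) does not require Calder\'on--Zygmund theory, since the Bogovski\u{\i}/Newtonian kernel for $\sigma$ is only weakly singular and is bounded on $L^p$ for all $1\leq p\leq\infty$, which also covers the endpoint $p=1$ (where, as the paper notes, Kantorovich duality gives the result even more directly).
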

Notice that the $p=1$ case can be obtained
by the 
identity $W_1(\mu,\lambda)
= \sup_{\|f\|_{\mathrm{Lip}} \leq 1} \int f (\mu - \lambda)$
and does not need a lower bound for the density.
For $B \in \mathcal B_k$, there are four squares
in $\mathcal B_{k+1}$ that are contained in $B$.
Using Lemma \ref{lem:goldmantrevisan}, we can write
\begin{align*}
	\esp[W_p^p(\nu_{k}|_{_{B}}, & \nu_{k+1}|_{_{B}}) 1_{G_{B}}] \\
	& \leqslant \theta_p(0.5 a)^{1-p} \mathrm{diam}(B)^p
	\esp \Big[ \int_{B}
	\Big|\nu_{k}|_{_B}-\nu_{k+1}|_{_B} \Big|^p \Big] \\
	& = \theta_p(0.5 a)^{1-p} \mathrm{diam}(B)^p
	\sum_{\tilde B \in \mathcal B_{k+1},
		\tilde B \subset B}\esp \Big[ \Big| \frac{\mu_N(B)}{\esp[\mu_N(B)]} -
	\frac{\mu_N(\tilde B)}{\esp[\mu_N(\tilde B)]}  \Big|^p \Big]
	\int_{\tilde B} \esp[\mu_N](x)^p \mathrm d x,
\end{align*}
where $\esp[\mu_N](\cdot)$ denotes the probability density function of $\esp[\mu_N]$ with respect to Lebesgue measure.
Use \eqref{eq:Hyp4} and that $|\tilde B| = |B|/4$ to obtain
\begin{align*}
	\esp \Big[ \Big| \frac{\mu_N(B)}{\esp[\mu_N(B)]} -
	\frac{\mu_N(\tilde B)}{\esp[\mu_N(\tilde B)]}  \Big|^p \Big]
	& =
	\esp \Big[ \Big| \frac{\mu_N(B)}{\esp[\mu_N(B)]} - 1
	+ 1-
	\frac{\mu_N(\tilde B)}{\esp[\mu_N(\tilde B)]}  \Big|^p \Big]		\\
	& \leqslant 2^{p-1}\Big(
	\esp \Big[ \Big| \frac{\mu_N(B)}{\esp[\mu_N(B)]} - 1
	\Big|^p\Big] + \esp
	\Big[\Big|
	1-
	\frac{\mu_N(\tilde B)}{\esp[\mu_N(\tilde B)]}  \Big|^p \Big]
	\Big)	\\
	&\leqslant
	\frac{2^{p-1}}{a^p} \big(
	|B|^{-p/2} g(|B|)^p + |\tilde B|^{-p/2} g(|\tilde B|)^p \big)       \\
	&\leqslant
	\frac{2^{p-1}}{a^p} (1 + 2^p) |B|^{-p/2} g(|B|/4)^p.
\end{align*}
By defining
$\alpha_1 = \theta_p(0.5 a)^{1-p} 2^{p-1} a^{-p} (1+2^p)$, we have obtained 
\[\esp[W_p^p(\nu_{k}|_{_{B}},\nu_{k+1}|_{_{B}})
1_{G_{B}}]
\leqslant \alpha_1 
\mathrm{diam}(B)^p|B|^{-p/2} g(|B|/4)^p \int_{B} \esp[\mu_N](x)^p \mathrm d x. \]
\noindent
\textbf{Substep 2.2 -- Outside of the event $G_B$:} Let us now consider 
$\esp[W_p^p(\nu_{k}|_{_{B}},\nu_{k+1}|_{_{B}})1_{G_{B}^c}]$.
We have the crude bound
\[
\esp[W_p^p(\nu_{k}|_{_{B}},\nu_{k+1}|_{_{B}})1_{G_{B}^c}]
\leqslant
\mathrm{diam}(B)^p\, 0.5 \esp[\mu_N(B)]
\mathbb P(G_B^c).
\]
For $\mathbb P(G_B^c)$ we may use
Markov's inequality to get that
\[
\mathbb P(G_B^c)
=\mathbb P\Big( 1-\frac{\mu_N(B)}{\esp[\mu_N(B)]} > 0.5  \Big)							 \leqslant
\mathbb P\Big( \Big|1-\frac{\mu_N(B)}{\esp[\mu_N(B)]}\Big|^p > 0.5^p  \Big)														\\
\leqslant
0.5^{-p} \,\mathbb E\Big[ \Big|1-\frac{\mu_N(B)}{\esp[\mu_N(B)]}\Big|^p
\Big].
\]
Define
$\alpha_2=(0.5)^{1-p} a^{-p}$.
So, by using that
$g$ is non-increasing 
and \eqref{eq:Hyp4}, we have obtained
\[\esp[W_p^p(\nu_{k}|_{_{B}},\nu_{k+1}|_{_{B}})1_{G_{B}^c}]
\leqslant 
\alpha_2 \mathrm{diam}(B)^p |B|^{-p/2} g(|B|/4)^p  \esp[\mu_N(B)].\]
\noindent
\textbf{Step 3 -- Gluing the squares:} 
Notice that for $B\in \mathcal B_k$ we have
$\mathrm{diam}(B)^p|B|^{-p/2} = 2^{p/2}$
and $|B| = N/4^k$.
By using that $\sum_{B \in \mathcal B_k}\nu_{k}|_{_{B}}
= \nu_k$ and
$\sum_{B \in \mathcal B_k}\nu_{k+1}|_{_{B}}
= \nu_{k+1}$, we have
\[\esp[W_p^p(\nu_{k},\nu_{k+1})]
\leq 
2^{p/2} g\bigg(\frac{N}{4^{k+1}}\bigg)^p  
\bigg( 
\alpha_1 \int_{Q_N} \mathbb E[\mu_N](x)^p \mathrm d x 
+
\alpha_2 \mathbb E[\mu_N(Q_N)] \bigg)\]
for $k + 1 \leq K $. Notice that, by the definition of $K$, we have that
$\frac{N}{4}\leqslant 4^K \leqslant N$. Then,
\begin{align*}
	\mathbb E[W_p^p(\nu_0,\nu_K )]^{1/p} &\leqslant
	\sum_{k=0}^{K-1}\mathbb E[W_p^p(\nu_k,\nu_{k+1} )]^{1/p}            \\
	&\leqslant \sqrt 2
	\bigg( 
	\alpha_1 \int_{Q_N} \mathbb E[\mu_N](x)^p \mathrm d x 
	+
	\alpha_2 \mathbb E[\mu_N(Q_N)] \bigg)^{1/p}
	\sum_{k=0}^{K-1} g\bigg(\frac{N}{4^{k+1}}\bigg)                     \\
	&\leq \sqrt 2
	\bigg( 
	\alpha_1 \int_{Q_N} \mathbb E[\mu_N](x)^p \mathrm d x 
	+
	\alpha_2 \mathbb E[\mu_N(Q_N)] \bigg)^{1/p}
	\Bigg(\int_{0}^{K-1}                                
	g\bigg(\frac{N}{4^{t+1}}\bigg) \mathrm d t
    + g(1)
    \Bigg).
\end{align*}
By a change of variables we obtain
\[\int_{0}^{K-1} 
g\bigg(\frac{N}{4^{t+1}}\bigg) \mathrm d t
= \frac{1}{\ln 4}\int_{\frac{N}{4^{K}}}^{N} 
g(y) \frac{\mathrm d y}{y}
\leq 
\frac{1}{\ln 4}
\int_1^N
g(y) \frac{\mathrm d y}{y}.\]
We have one term left to understand, namely
$\mathbb E[W_p^p(\nu_K,\mu_N)]$. We can use that
\[W_p^p(\nu_K,\mu_N) \leqslant \sum_{B \in \mathcal B_{K}}
W_p^p(\nu_K|_{_B}, \mu_N|_{_B}) \leqslant
\sum_{B \in \mathcal B_{K}}  \mathrm{diam}(B)^p \mu_N(B)
\]
We now use that the diameter of each $B \in \mathcal B_K$
is
$\frac{\sqrt{2N}}{2^K} =  \sqrt{\frac{2N}{4^K}} \leqslant 2 \sqrt 2$
and that 
$ \sum_{B \in \mathcal B_{K}}   \mu_N(B)$ is $\mu_N(Q_N)$.
Here we are using that $\mu_N (\partial Q_N) = 0$, 
consequence of $\esp[\mu_N(\partial Q_N)] = 0$.
Then, 
\[\esp[W_p^p(\nu_K,\mu_N)] \leqslant
2^{3p/2} \esp[\mu_N(Q_N)].\]
We conclude the inequality
\begin{align*}
	\mathbb E[W_p^p(\mu_N,\overline{\mu}_N)]^{1/p} \leqslant &
	\sqrt 2
	\bigg( 
	\alpha_1 \int_{Q_N} \mathbb E[\mu_N](x)^p \mathrm d x 
	+
	\alpha_2 \mathbb E[\mu_N(Q_N)] \bigg)^{1/p}
	\bigg(g(1)  + \int_1^N
	g(y) \frac{\mathrm d y}{y} \bigg)              \\
	& + 2^{3/2} \esp[\mu_N(Q_N)]^{1/p}.
\end{align*}
By using that $\mathbb E[\mu_N](x) \leq A$
and defining 
$\alpha = 2^{p-1} \sqrt 2^{p}
(\alpha_1 A^p  + (\alpha_2 + 2^{3p/2}) A)$ we get 
\[\mathbb E[W_p^p(\mu_N,\overline{\mu}_N)] 
\leq \alpha N 
\bigg(1 + g(1) + \int_1^N g(y) \frac{\mathrm d y}{y} \bigg)^p
\leq 
\alpha N 
\bigg(1 + g(1) + g(N) + \int_1^N g(y) \frac{\mathrm d y}{y} \bigg)^p.\]
Since $A \leq A^p$ and $a^{-p} \leq a^{1-2p}$, 
we can bound $\alpha$ by a $a^{1-2p} A^p$ times a constant
$C_p$ that only depends on $p$ (for instance,
$C_p = 2^{5p} (1+\theta_p)$ works).

\subsection{Proof of Theorem \ref{thm:minimal-hypotheses dimension}}
The proof of Theorem \ref{thm:minimal-hypotheses dimension} follows the proof of Theorem \ref{thm:minimal-hypotheses} line by line. One has to divide the cube $[0,N^{1/d}]^d$ in $2^{dk}$ sub-cubes of size $2^{-k}N^{1/d}$. The key
\cite[Lemma 3.4]{goldmantrevisan} (stated here as  Lemma \ref{lem:goldmantrevisan}) is valid in any dimension which allows us to generalize the proof.

\subsection{Proof of Corollary \ref{cor:main}}

Theorem \ref{thm:minimal-hypotheses} 
gives the upper bound if we define $g(x) = \gamma x^{-\varepsilon/2}$,
\[\mu_N = X|_{C_N} \quad \mbox{ and }
\overline{\mu}_N = 
\frac{X(C_N)}{\esp[X(C_N)]} \esp[X]|_{C_N}.\]
For the lower bound consider $\delta >0 $ and define
$A_\delta = \{ (1-\delta) N \leq X(C_N) \leq (1+\delta) N \}$.
In $A_\delta$, the density of 
$\overline{\mu}_N$
is bounded from above by $(1+\delta)a^{-1} A$
so that, by Lemma \ref{lem:lowerbound}, in $A_\delta$
\[W_p^p(\mu_N, \overline{\mu}_N)
\geq  \frac{\alpha_2^p X(C_N)}{(1+\delta)^{p/2} a^{-p/2}}
\geq 
c_{\delta} N,\]
where
$c_{\delta}= \alpha_2^p(1-\delta) (1+\delta)^{-p/2}a^{p/2}$.
Then, $\esp[W_p^p (\mu_N, \overline{\mu}_N)]
\geq 
c_{\delta} N \mathbb P(A_\delta)$.
By noticing that
\[
\mathbb P(A_\delta^c)
=
\mathbb P
\bigg(\Big| \frac{X(C_N)}{N} - 1
\Big| > \delta \bigg)                          
\leq 
\frac{\esp \Big[ \Big| \frac{X(C_N)}{N} - 1
	\Big|  \Big]}{\delta}                           
\leq \frac{A+1}{\delta},
\]
we may choose $\delta = 2(A+1)$ and obtain the inequality
$\displaystyle \esp[W_p^p (\mu_N, \overline{\mu}_N) ]
\geq 
\frac{c_{\delta}}{2}  N$.

\subsection{Proof of Proposition \ref{prop:DPP}}
Consider a Hermitian determinantal point process $X$ on $\R^d$
(or any space). Then, for any measurable $B \subset \R^d$, we have $X(B)=\sum_{i=1}^n\xi_i$, where $n \in \N \cup \{+\infty\}$ and the $\xi_i$'s are independent Bernoulli variables
\cite[Theorem 4.5.3]{DPPs}.
Bernstein's inequality 
\cite[Theorem 3]{BLB04}
applied to the bounded centered independent random variables $\xi_i-\esp[\xi_i]$ yields
\[\P(|X(B)-\esp[X(B)]| \geqslant t) \leqslant 2\exp\Big(-\frac{t^2}{2\Var(X(B))+t/3}\Big)\]
for every $t >0$. Then,
\begin{align*}
	\esp[|X(B) & - \esp[X(B)]|^p]\\
	& = \int_0^{+\infty}pt^{p-1}\P(|X(B) - \esp[X(B)]|>t)
	\mathrm dt\\
	& \leqslant 2\int_0^{+\infty}pt^{p-1}e^{-t^2/(2\Var(X(B))+t/3)}
	\mathrm dt\\
	& \leqslant 2\int_0^{6\Var(X(B))}pt^{p-1}e^{-t^2/(4\Var(X(B)))}
	\mathrm dt+
	2\int_{6\Var(X(B))}^{+\infty}pt^{p-1}e^{-3t/2} \mathrm dt\\
	& \leqslant 2^{p+1}\Var(X(B))^{p/2}
	\int_0^{3\sqrt{\Var(X(B))}}pu^{p-1    }e^{-u^2} \mathrm dt\\
	& \hspace{5cm}  
	+2e^{-3 \Var(X(B))/4}\int_{6\Var(X(B))}^{+\infty}pt^{p-1}e^{-3t/4} \mathrm dt\\
	& \leqslant 2^{p+1}\Var(X(B))^{p/2}\int_0^{+\infty}pu^{p-1}e^{-u^2}
	\mathrm dt+2e^{-3 \Var(X(B))/4}\int_0^{+\infty}pt^{p-1}e^{-3t/4}
	\mathrm dt\\
	& \leqslant \gamma_p \big(\Var(X(B))^{p/2}+e^{-3 \Var(X(B))/4}\big)
	\\
	& \leqslant \gamma_p (\Var(X(B))^{p/2}+ 1 )
\end{align*}
where $\gamma_p$ is the maximum of both integrals
times $2^{p+1}$.

\section{Proof of Proposition \ref{Ginibre}}\label{sec:ProofGinibre}
The main tool to prove Proposition \ref{Ginibre} is the Wasserstein distance on point processes, which was introduced by Huesmann \cite{Huesmann2016}. This distance should not be mistaken with the classical Wasserstein distance $W_p$ which we introduced earlier. We refer to the recent survey \cite{lachiezereySurvey} for a quick presentation of this distance, or to \cite{EHJM} for a detailed study of it. This Wasserstein distance on point processes is only defined for translation invariant point processes with unit intensity, and for two such processes $X$ and $Y$, their distance will be written as $\mathrm{Wass}_p(X,Y)$. 
 We can define $\mathrm{Wass}_p$ on the space of stationary random measures with unit intensity as
\[\mathrm{Wass}_p^p (X, Y) = \inf_{Q \in \mathrm{cpl}(X,Y)} \mathbb E_{Q}\Big[ \inf_{q \in \mathrm{cpl}(X^\omega,Y^\omega)} \limsup_{N\to \infty} \frac{1}{N} \int_{\Lambda_N \times \mathbb{R}^d} |x-y|^d \mathrm d q(x,y) \Big]\]
where $\Lambda_N = [-N^{1/d}/2,N^{1/d}/2]^d$.
The original random measures $X$ and $Y$
may be defined on different $\Omega$'s
but once the coupling $Q$ is fixed, 
$X$ and $Y$ denote the new coupled random measures,
and $X^\omega$ and $Y^\omega$ their realizations.
Notice that, strictly speaking, 
$\mathrm{Wass}_p(X, Y)$ is a distance
on the space of distributions of translation invariant random measures with unit intensity
so that we will denote
it by $\mathrm{Wass}_p (P_1, P_2)$
for $P_1$ and $P_2$ the laws of $X$ and $Y$
respectively.

\medskip
\noindent
We will use \cite[Propositions 2.10 and 2.11]{EHJM} 
which we reproduce here for completeness. Here
we consider the action of $\mathbb R^2$ on
$\mathbb R^2 \times \mathbb R^2$ given by
translations, i.e., a vector $v \in \mathbb R^2$
acts on an element $(x,y) \in \mathbb R^2 \times \mathbb R^2$
by taking it to $(x + v, y+ v) \in \mathbb R^2 \times
\mathbb R^2$.

\begin{proposition}[\cite{EHJM} Propositions 2.10 and 2.11]
$\mathrm{Wass}_p$ is lower semi-continuous for the
topology of weak convergence on
the product of the spaces of probability measures. Moreover, if $P_1$ and $P_2$ are the distributions of stationary point processes satisfying $\mathrm{Wass}_p(P_1,P_2) < +\infty$, 
 then there exists a point process $q$ 
 on $\mathbb R^2 \times \mathbb R^2$ 
 invariant under translations by
 $\mathbb R^2$,
 whose (random) marginals
 $X$ and $Y$ are distributed according to $P_1$ and $P_2$
 and such that
\[ \mathrm{Wass}_p^p(P_1,P_2) = \mathbb{E}
\Big( \int_{\Lambda_1 \times \mathbb{R}} |x-y|^p q(\mathrm dx,\mathrm dy)\Big) . \]
This point process $q$ is called an optimal matching.
\end{proposition}
First, let us explain how this proposition will imply Proposition \ref{Ginibre}. The stationary random lattice is  $Z=\{ k +U, k \in \mathbb{Z}^2 \}$, where $U$ is a uniform random variable on $[0,1]^d$. The distribution of $Z$ is written as $P_Z$. If we show that $\mathrm{Wass}_p(\mathrm{Gin},P_Z)$ is finite, then there exists a  (random) coupling between $\mathrm{Gin}$ and $P_Z$ for which each realization is a matching. This matching gives the (stationary) displacement field from the perturbed lattice $Z$ to the points of the Ginibre ensemble. Since $\mathrm{Wass}_p(P_Z, \mathrm{Leb})$ is finite, it is sufficient to prove that $\mathrm{Wass}_p(\mathrm{Gin}, \mathrm{Leb})$ is finite.

\medskip
\noindent
This will follow from the construction of two auxiliary random measures $X_N$ and $Y_N$, the first one converging towards $\mathrm{Gin}$ and the second one converging towards the Lebesgue measure, which satisfy $\mathrm{Wass}_p(X_N,Y_N) \leq C_p$. This construction is analogous to the one presented in the proof of \cite[section 2.3, p17]{huesmannleble}.

\medskip
\noindent
For $\Lambda_N=[-\sqrt{N}/2,\sqrt{N}/2]^2$, we define
\[ \hat{X}_N = \bigcup_{k \in \mathbb{Z}^2} \Big( \big(\mathrm{Gin}\cap \Lambda_N \big) + \sqrt{N}k
\Big).\]
This point process is the $\sqrt{N}\mathbb{Z}^2$ periodic version of the restriction of Ginibre to $\Lambda_N$. Then, if $U$ is a uniform random variable on $[-1/2,1/2]^2$
independent of $\mathrm{Gin}$, we set $X_N = \hat{X}_N + \sqrt{N}U$. We say that $X_N$ is the periodic and stationary version of $\mathrm{Gin}\cap \Lambda_N$. By definition, $X_N$ is a stationary random measure, and its intensity is equal to $1$. In addition, we use the same uniform random variable $U$ for all $N$. This coupling will help later.

\medskip
\noindent
On the other hand, making the random measure $\frac{\mathrm{Gin}(\Lambda_N)}{N} \mathrm{Leb}_{|\Lambda_N}$ periodic gives the measure $\frac{\mathrm{Gin}(\Lambda_N)}{N} \mathrm{Leb}$, which is already translation invariant.
We immediately get that $X_N$ and $\frac{\mathrm{Gin}(\Lambda_N)}{N} \mathrm{Leb}$ have unit intensity.

\medskip
\noindent
Applying Theorem \ref{thm:minimal-hypotheses} to the Ginibre point process, there exists a (random) coupling $\Pi_N$ between $\mathrm{Gin} \cap \Lambda_N$ and $\frac{\mathrm{ Gin}(\Lambda_N)}{N} \mathrm{Leb}_{|\Lambda_N}$ such that 
\[\esp\big[W_p^p(\mathrm{Gin} \cap \Lambda_N,\frac{\mathrm{Gin}(\Lambda_N)}{N} \mathrm{Leb}_{|\Lambda_N} )\big] = \esp\Big[\int |x-y|^p d\Pi_N(x,y)\Big] \leq C_p N.\]
By copying and stationarizing $\Pi_N$, one gets a translation invariant coupling $\Pi^*_N$ between $X_N$ and $\frac{\mathrm{Gin}(\Lambda_N)}{N} \mathrm{Leb}$ with cost bounded from above by $C_p$, i.e.,
\[ \mathrm{Wass}_p^p\Big(X_N, \frac{\mathrm{Gin}(\Lambda_N)}{N} \mathrm{Leb}\Big) \leq C_p  .\]
Finally, since we took the same uniform random variable for all $N$ in the definition of $X_N$ and, for any fixed $\omega$,  $\Lambda_1 + U(\omega)$ contains a neighborhood of the origin, we get that for any compact $K \subset \mathbb{C}$ there exists $N_{\omega}$ such that $\forall N \geq N_{\omega}, K \subset \Lambda_N+\sqrt{N}U(\omega)$. Hence, for large $N$, a compactly supported function will only see the points in the central block, implying the weak convergence of $X_N$ to the Ginibre point process.
The weak convergence of $\frac{\mathrm{Gin}(\Lambda_N)}{N} \mathrm{Leb}$ comes from the convergence in probability of $\frac{\mathrm{Gin}(\Lambda_N)}{N}$ towards $1$. This ends the proof of Proposition \ref{Ginibre}.

\section{Appendix: Proof of the lower bound}\label{sec:Appendix}

\begin{proof}[Proof of Lemma \ref{lem:lowerbound}]
	Let us begin with $p=1$.
	Define the 1-Lipschitz function
	$f: \mathbb R^d \to [0,\infty[$,
	\[f(x) = \max \big\{(c-|x-x_1|)_+,\dots,(c-|x-x_N|)_+\big\},\]
	so that $f(x_i) = c$. 
	Define $\mu_N = \sum_{i=1}^N \delta_{x_i}$ and notice that
	$\int_{\mathbb R^d} f \mathrm d \mu_N = c N$.
	Since $\mu$ has a density bounded from above by $A$,
	\[\int_{\mathbb R^d} f \mathrm d \mu
	\leq  A \sum_{i=1}^N
	\int_{\mathbb R^d} 1_{\{|x-x_i|\leq c\}} (c-|x-x_i|) \mathrm d x .\]
	Since
	\[\int_{\mathbb R^d} 1_{\{|x-y|\leq c\}} (c-|x-y|) \mathrm dx
	=c |B_1| c^d  - |\partial B_1|
	\int_0^{c} r^d \mathrm d r
	=\frac{c^{d+1} |\partial B_1|}{d(d+1)} ,\]
	we get that
	\[\int_{\mathbb R^d}f \mathrm d \mu_N
	-\int_{\mathbb R^d}f \mathrm d \mu
	\geq N
	\left( c - A\frac{c^{d+1} |\partial B_1|}{d(d+1)}  \right). \]
	Taking $c$ that maximizes
	$c \mapsto c- A\frac{c^{d+1} |\partial B_1|}{d(d+1)} $, 
	i.e., $c =\big(\frac{d}{A|\partial B_1|}\big)^{1/d} $ gives 
	\[W_1(\mu_N,\mu)
	\geq N\Big(\frac{d}{A|\partial B_1|}\Big)^{1/d} \Big(\frac{d}{d+1}\Big) .\]
	By Hölder's inequality,
	$W_1(\mu,\nu) \leq N^{1-\frac{1}{p}} W_p(\mu,\nu)$,
	so that
	\[W_p(\mu_N,\mu)
	\geq N^{1/p}
	\Big(\frac{d}{A|\partial B_1|}\Big)^{1/d} \Big(\frac{d}{d+1}\Big) .\]
\end{proof}

% %% if your bibliography is in bibtex format, uncomment commands:
% \bibliographystyle{alpha} % Style BST file
% \bibliography{biblio.bib}       % Bibliography file (usually '*.bib')

\end{document}